 \newtheorem{thm}{Theorem}[section]
 \newtheorem{cor}[thm]{Corollary}
 \newtheorem{lem}[thm]{Lemma}
 \newtheorem{prop}[thm]{Proposition}
 \theoremstyle{definition}
 \newtheorem{defn}[thm]{Definition}
 \theoremstyle{remark}
 \newtheorem{rem}[thm]{Remark}
 \numberwithin{equation}{section}
\begin{document}

%
%
%
%
%
%
%
%
%

\title[COMPACTNESS IN FUNCTION SPACES]
 {COMPACTNESS IN FUNCTION SPACES}

\author[\v Lubica Hol\'a]{\v Lubica Hol\'a}

\address{%
Academy of Sciences,\\
Institute of Mathematics\\
\v Stef\'anikova 49,\\
81473 Bratislava,\\
Slovakia}

\email{hola@mat.savba.sk}

\thanks{This work was completed with the support of our
\TeX-pert.}
\author{Du\v san Hol\'y}
\address{Department of Mathematics and Computer Science, Faculty of Education,
Trnava University,
Priemyseln\'a 4, 918 43 Trnava, Slovakia}
\email{dusan.holy@truni.sk}
\subjclass{Primary 54C35; Secondary 54C08}

\keywords{compactness, quasicontinuous function, function of Baire class $\alpha$,  finitely equicontinuous, pointwise bounded, boundedly compact metric space.}

\date{January 1, 2004}
\dedicatory{}

\begin{abstract}
Let $X$ be a locally compact topological space, $(Y,d)$ be a boundedly compact metric space and $LB(X,Y)$ be the space of all locally bounded functions from $X$ to $Y$. We characterize compact sets  in $LB(X,Y)$ equipped with the topology of uniform convergence on compacta. From our results we obtain the following interesting facts for $X$ compact:

$\bullet$ If $(f_n)_n$ is a sequence of uniformly bounded finitely equicontinuous functions of Baire class $\alpha$ from $X$ to $\Bbb R$, then
there is a uniformly convergent subsequence $(f_{n_k})_k$;

$\bullet$ If $(f_n)_n$ is a sequence of uniformly bounded finitely equicontinuous lower (upper) semicontinuous functions from $X$ to $\Bbb R$, then
there is a uniformly convergent subsequence $(f_{n_k})_k$;

$\bullet$ If $(f_n)_n$ is a sequence of uniformly bounded finitely equicontinuous quasicontinuous functions from $X$ to $Y$, then there is a uniformly convergent subsequence $(f_{n_k})_k$.
\end{abstract}

\maketitle
\section{Introduction}

The Arzel\` a-Ascoli theorem is a well known and fundamental result of mathematical analysis concerning compactness  of a family of real-valued continuous functions defined on a closed and bounded interval equipped with the topology of uniform convergence.
It claims: If a sequence $(f_n)_n$ of continuous real-valued functions defined on $[0, 1]$ is uniformly bounded
and equicontinuous then there is a subsequence $(f_{n_k})_k$  which is uniformly convergent. The converse is also true.
The notion of equicontinuity was introduced  by the Italian mathematicians Cesare Arzel\` a \cite{Ar1,Ar2} and Giulio Ascoli \cite{As}.
\bigskip

In our paper we introduce the notion of finite equicontinuity as a key notion to characterize compact subsets of the space of locally bounded functions equipped with the topology of uniform convergence on compacta.  In the class of continuous functions the notions of equicontinuity and finite equicontinuity coincide.

We apply our results to the space of functions of Baire class $\alpha$, quasicontinuous, cliquish, upper semicontinuous, lower semicontinuous, etc.

In \cite{Hol} Ascoli-type theorem for quasicontinuous locally bounded functions was studied and in  \cite{HH4} we proved Ascoli-type theorems for quasicontinuous subcontinuous functions. In \cite{HH6} we studied compact subsets of quasicontinuous functions and in \cite{HH5} compact subsets of minimal usco and minimal cusco maps equipped with the topology of uniform convergence on compact sets. Notice that some relative results using quasicontinuous and subcontinuous selections can be found in \cite{Hol}. Ascoli-type theorems for so-called densely continuous forms and locally bounded densely continuous forms were proved in \cite{HM} and \cite{Mc}.

\bigskip
Here is the main result of our paper:

Let $X$ be a locally compact space, $(Y,d)$ be a boundedly compact metric space,  $LB(X,Y)$ be the space of locally bounded functions from $X$ to $Y$  and $\tau_{UC}$ be the topology of uniform convergence on compact sets. A subfamily $\mathcal E$ of the space  $LB(X,Y)$  is compact
in $(LB(X,Y),\tau_{UC})$  if and only if $\mathcal E$ is closed, pointwise bounded and finitely equicontinuous.

\bigskip

\section{Characterization of compactness in function spaces}

\bigskip

In what follows let $\Bbb N$ be the set of natural numbers, $\Bbb R$ be the space of real numbers with the usual metric.

Let $X$ be a topological space, $(Y,d)$ be a metric space and $F(X,Y)$ be the space of all functions from $X$ to $Y$.

The open $d$-ball with center $z_0\in Y$ and radius $\varepsilon
>0$ will be denoted by $S(z_0,\varepsilon)$ and the
$\varepsilon $-parallel body $\bigcup_{a\in A}S(a,\varepsilon)$ for
a subset $A$ of $Y$ will be denoted by $S(A,\varepsilon)$. The closed $d$-ball with center $z_0\in Y$ and radius $\varepsilon
>0$ will be denoted by $B(z_0,\varepsilon)$.

\bigskip

We will define the topology $\tau_p$ of
pointwise convergence on $F(X,Y)$. The topology $\tau_p$ of pointwise
convergence on $F(X,Y)$ is induced by the uniformity $\frak U_p$ of
pointwise convergence which has a base consisting of sets of the
form

$$W(A,\varepsilon )=\{(f,g):\ \forall\ x\in A\ \ d(f(x),g(x))<
\varepsilon \},$$

where $A$ is a finite subset of $X$ and $\varepsilon >0$. The general
$\tau_p$-basic neighborhood of $f\in F(X,Y)$ will be denoted by
$W(f,A,\varepsilon )$, i.e. $W(f,A,\varepsilon
)=W(A,\varepsilon )[f]$.
$$W(f,A,\varepsilon )=\{g \in F(X,Y):\ d(f(x),g(x)) <
\varepsilon\ \text{for every}\ x \in A\}.$$

\bigskip

We will define the topology $\tau_{UC}$ of uniform convergence on
compact sets on $F(X,Y)$. This topology is induced by the
uniformity $\frak U_{UC}$  which has a base consisting of sets of the
form

$$W(K,\varepsilon )=\{(f,g):\ \forall\ x\in K\ \ d(f(x),g(x))<
\varepsilon \},$$

where $K$ is a compact subset of $X$ and $\varepsilon >0$. The general
$\tau_{UC}$-basic neighborhood of $f\in F(X,Y)$ will be denoted by
$W(f,K,\varepsilon )$, i.e. $W(f,K,\varepsilon
)=W(K,\varepsilon )[f]$.
$$W(f,K,\varepsilon) = \{g \in F(X,Y):\ d(f(x),g(x)) < \varepsilon\ \text{for every}\ x \in K\}.$$

\bigskip

Finally we will define the topology $\tau_U$ of uniform
convergence on $F(X,Y)$.  Let $\varrho$ be the (extended-valued)
metric on $F(X,Y)$ defined by
$$\varrho(f,g)=\sup\{d(f(x),g(x)):\ x\in X\},$$ for each
$f,g\in F(X,Y)$. Then the topology of uniform convergence for
the space $F(X,Y)$ is the topology generated by the metric $\varrho$.

\bigskip

\begin{defn}\label{AAA1}
	Let $X$ be a topological space and $(Y,d)$ be a metric space. We say that a subset $\mathcal{E}$  of $F(X,Y)$ is finitely equicontinuous at a point $x$ of $X$ provided that for every $\varepsilon >0$, there exists a finite family $\mathcal B$ of subsets of $X$ such that $\cup\mathcal B$ is a neighbourhood of $x$ and such that for every $f\in\mathcal{E}$, for every $B\in\mathcal B$ and for every $p,q\in B$, $d(f(p),f(q))<\varepsilon$. Then $\mathcal{E}$ is finitely equicontinuous provided that it is finitely equicontinuous at every point of $X$.
\end{defn}

\bigskip

If $f$ is a function from a topological space $X$ to a metric space $(Y,d)$, we say that $f$ is locally bounded, if for every $x\in X$ there is an open set $U\subset X$, $x \in U$ such that $f(U)= \{f(u):\ u \in U\}$ is a bounded subset of $(Y,d)$.

Denote by $LB(X,Y)$ the space of all locally bounded functions from a topological space $X$ to a metric space $(Y,d)$.

\begin{rem}\label{AAA2}
	It is easy to verify that $LB(X,Y)$ is a closed subset of $(F(X,Y),\tau_U)$ and if $X$ is locally compact, then $LB(X,Y)$ is also a closed set in $(F(X,Y),\tau_{UC})$.
\end{rem}

\begin{rem}\label{AAA3}
	Notice that if $X$ is a topological space,  $(Y,d)$ is  a metric space and  $\mathcal{E} \subset F(X,Y)$ is finitely equicontinuous, then $\mathcal{E} \subset LB(X,Y)$.
\end{rem}

\begin{prop}\label{AAA4}
	Let $X$ be a topological space and $(Y,d)$ be a metric space. If $\mathcal{E}$ is  finitely equicontinuous   subset of $C(X,Y)$, then $\mathcal{E}$ is equicontinuous.
\end{prop}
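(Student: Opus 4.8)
The plan is to establish equicontinuity of $\mathcal{E}$ at an arbitrary point $x\in X$; since $x$ is arbitrary this gives equicontinuity. Fix $\varepsilon>0$ and apply Definition~\ref{AAA1} at $x$ with $\varepsilon/2$ in place of $\varepsilon$: we obtain a finite family $\mathcal{B}$ of subsets of $X$ such that $\bigcup\mathcal{B}$ is a neighbourhood of $x$ and such that, for every $f\in\mathcal{E}$, every $B\in\mathcal{B}$ and all $p,q\in B$, we have $d(f(p),f(q))<\varepsilon/2$; in particular the diameter of $f(B)$ is at most $\varepsilon/2$ for all $f\in\mathcal{E}$ and $B\in\mathcal{B}$.

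The core of the argument is to replace $\mathcal{B}$ by the subfamily $\mathcal{B}_0=\{B\in\mathcal{B}:x\in\overline{B}\}$ and to check that $\bigcup\mathcal{B}_0$ is still a neighbourhood of $x$. First choose an open set $V$ with $x\in V\subseteq\bigcup\mathcal{B}$. For each $B\in\mathcal{B}\setminus\mathcal{B}_0$ the set $X\setminus\overline{B}$ is an open neighbourhood of $x$; because $\mathcal{B}\setminus\mathcal{B}_0$ is finite, the set $W=V\cap\bigcap\{X\setminus\overline{B}:B\in\mathcal{B}\setminus\mathcal{B}_0\}$ is an open neighbourhood of $x$. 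Any $y\in W$ lies in some member of $\mathcal{B}$ (as $W\subseteq\bigcup\mathcal{B}$) but in no member of $\mathcal{B}\setminus\mathcal{B}_0$ (as $W$ is disjoint from each such $\overline{B}$), hence $y\in\bigcup\mathcal{B}_0$; thus $W\subseteq\bigcup\mathcal{B}_0$.

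Next I would bring in continuity. Fix $f\in\mathcal{E}$ and $B\in\mathcal{B}_0$. Since $x\in\overline{B}$ and $f$ is continuous, $f(x)\in f(\overline{B})\subseteq\overline{f(B)}$, and since the diameter of $f(B)$ is at most $\varepsilon/2$ so is that of $\overline{f(B)}$; hence $d(f(x),f(q))\le\varepsilon/2$ for every $q\in B$. Now take any $y\in W$. By the previous paragraph $y\in B$ for some $B\in\mathcal{B}_0$, so $d(f(x),f(y))\le\varepsilon/2<\varepsilon$. Since $f\in\mathcal{E}$ was arbitrary, $W$ is a neighbourhood of $x$ witnessing equicontinuity of $\mathcal{E}$ at $x$ for the given $\varepsilon$, and the proof is complete.

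I expect the one genuinely delicate point to be the verification that passing from $\mathcal{B}$ to $\mathcal{B}_0$ preserves the neighbourhood property — this is precisely where finiteness of $\mathcal{B}$ is used, since an infinite family could have members accumulating at $x$ with none of their closures containing $x$. The remaining steps (the diameter bookkeeping and the inclusion $f(\overline{B})\subseteq\overline{f(B)}$ coming from continuity) are routine, and I note that no hypothesis on $X$ beyond being a topological space, and only continuity of the members of $\mathcal{E}$ at $x$, is needed.
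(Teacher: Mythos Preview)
Your proof is correct and follows essentially the same approach as the paper's: reduce to those $B\in\mathcal{B}$ with $x\in\overline{B}$ (the paper states this as a ``without loss of generality'' while you spell out the construction of $W$ explicitly), then use continuity of each $f$ at $x$ to bound $d(f(x),f(q))$ for $q\in B$. The only cosmetic difference is that the paper uses the neighbourhood formulation of continuity (pick $U$ with $d(f(x),f(s))<\varepsilon/2$ on $U$, then a point in $U\cap B\ne\emptyset$ and the triangle inequality) where you instead use $f(x)\in f(\overline{B})\subseteq\overline{f(B)}$ together with the diameter bound on $\overline{f(B)}$.
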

\begin{proof}
	Let $x \in X$ and let $\varepsilon > 0$. 	
	Since $\mathcal{E}$ is finitely equicontinuous, there exists a finite family $\mathcal B$ of nonempty subsets  of $X$ such that $\cup\mathcal B$ is a neighborhood of $x$ and such that for every $f\in \mathcal{E}$, for every $B\in\mathcal B$ and for every $p,q\in B$, $d(f(p),f(q))<\frac\epsilon 2$. Without loss of generality we can suppose that $x\in  \overline B$ for every $B\in\mathcal B$.
	We will show that for every $z\in\cup\mathcal B$ and for every $f\in\mathcal{E}, d(f(x),f(z)) < \epsilon.$
	
	Let $z\in\cup\mathcal B$ and let $f\in\mathcal{E}$. There is $B\in\mathcal B$ such that $z\in B$.  There is a neighbourhood $U$ of $x$ such that $d(f(x),f(s)) < \epsilon/2$ for every $s \in U$.  Since $U \cap B \ne \emptyset$, $d(f(x),f(z)) <\epsilon$.
\end{proof}

\bigskip

\begin{lem}\label{AAA5}
	Let $X$ be a topological space and $(Y,d)$ be a metric space. Let $\mathcal{E}$ be a finitely equicontinuous subset of $F(X,Y)$. Then the closure of $\mathcal{E}$ relative to the topology $\tau_p$ of pointwise convergence is also finitely equicontinuous subset of $F(X,Y)$.
\end{lem}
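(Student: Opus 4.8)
The plan is to show that the \emph{same} finite family which witnesses finite equicontinuity of $\mathcal{E}$ at a point $x$, but chosen with a slightly smaller tolerance, also witnesses finite equicontinuity of the $\tau_p$-closure $\overline{\mathcal{E}}$ at $x$. So fix $x\in X$ and $\varepsilon>0$. First I would invoke finite equicontinuity of $\mathcal{E}$ at $x$ with tolerance $\varepsilon/2$: there is a finite family $\mathcal B$ of subsets of $X$ such that $\cup\mathcal B$ is a neighbourhood of $x$ and $d(f(p),f(q))<\varepsilon/2$ for every $f\in\mathcal{E}$, every $B\in\mathcal B$ and all $p,q\in B$.

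Next I would fix $g\in\overline{\mathcal{E}}$ (closure in $\tau_p$), $B\in\mathcal B$ and $p,q\in B$, and estimate $d(g(p),g(q))$. For an arbitrary $\delta>0$ the $\tau_p$-basic neighbourhood $W(g,\{p,q\},\delta)$ meets $\mathcal{E}$, so there is $f\in\mathcal{E}$ with $d(g(p),f(p))<\delta$ and $d(g(q),f(q))<\delta$. The triangle inequality then gives
$$d(g(p),g(q))\le d(g(p),f(p))+d(f(p),f(q))+d(f(q),g(q))<\tfrac{\varepsilon}{2}+2\delta.$$
Since $\delta>0$ was arbitrary, $d(g(p),g(q))\le\varepsilon/2<\varepsilon$. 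Hence the same family $\mathcal B$ shows that $\overline{\mathcal{E}}$ is finitely equicontinuous at $x$, and as $x$ was arbitrary, $\overline{\mathcal{E}}$ is finitely equicontinuous on $X$.

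There is no serious obstacle here; the only delicate point is the bookkeeping of tolerances. Because one has to pass to the limit in $\delta$, the argument must start from $\varepsilon/2$ (or any constant strictly below $\varepsilon$) so as to recover the \emph{strict} inequality demanded by Definition \ref{AAA1}. I would also remark that, combined with Remark \ref{AAA3}, the conclusion automatically gives $\overline{\mathcal{E}}\subset LB(X,Y)$, so the pointwise closure of a finitely equicontinuous family still consists of locally bounded functions.
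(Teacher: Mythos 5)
Your argument is correct and is essentially the paper's own proof: both approximate $g$ at the two points $p,q$ by a member of $\mathcal{E}$ (the paper via a pointwise convergent net with tolerance $\varepsilon/3$, you via an arbitrary $\delta$ with tolerance $\varepsilon/2$) and conclude with the triangle inequality. The only difference is bookkeeping of the tolerances, and your handling of the limit in $\delta$ to recover the strict inequality is fine.
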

\begin{proof}
	Let $x\in X$ and let $\varepsilon >0$. Since $\mathcal{E}$ is finitely equicontinuous at $x$, there exists a finite family $\mathcal B$ of subsets of $X$ such that $\cup\mathcal B$ is a neighbourhood of $x$ and such that for every $f\in\mathcal{E}$, for every $B\in\mathcal B$ and for every $p,q\in B$, $d(f(p),f(q))<\frac\varepsilon 3$.
	
	Let $f\in\overline{\mathcal{E}}$. There is a net  $\{f_\sigma:\ \sigma \in \Sigma\} \subset \mathcal{E}$ pointwise convergent to $f$. Let $B\in\mathcal B$ and  $p,q\in B$. There is $\sigma\in\Sigma$ such that $d(f(p),f_\sigma (p))<\frac\varepsilon 3$ and $d(f(q),f_\sigma (q))<\frac\varepsilon 3$. Then
	\begin{align*}
	&d(f(p),f(q))\leq\\ \leq
	&d(f(p),f_\sigma(p))+d(f_\sigma(p)),f_\sigma(q))+
	d(f_\sigma(q)),f(q))<\\ <&\frac\varepsilon 3+\frac\varepsilon 3+\frac\varepsilon 3 =\varepsilon .
	\end{align*}
\end{proof}

\bigskip
\begin{prop}\label{AAA6}
	Let $X$ be a topological space and $(Y,d)$ be a metric space such that every bounded set is totally bounded. If $\mathcal{E}$ is a finitely equicontinuous  subset of $F(X,Y)$ and $f$ is locally bounded, then $\mathcal{E}\cup \{f\}$ is finitely equicontinuous.
\end{prop}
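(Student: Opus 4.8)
The plan is to verify finite equicontinuity of $\mathcal{E}\cup\{f\}$ directly at an arbitrary point $x\in X$, by refining a finite family that witnesses finite equicontinuity of $\mathcal{E}$ at $x$ with a second finite family built from the local boundedness of $f$ together with the standing hypothesis that bounded subsets of $Y$ are totally bounded.

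Fix $x\in X$ and $\varepsilon>0$. First I would apply the finite equicontinuity of $\mathcal{E}$ at $x$ (with the same $\varepsilon$) to get a finite family $\mathcal{B}_0$ of subsets of $X$ such that $\cup\mathcal{B}_0$ is a neighbourhood of $x$ and $d(g(p),g(q))<\varepsilon$ whenever $g\in\mathcal{E}$, $B\in\mathcal{B}_0$ and $p,q\in B$. Next, since $f$ is locally bounded, I would choose an open set $U$ with $x\in U$ and $f(U)$ bounded; by hypothesis $f(U)$ is then totally bounded, so there are finitely many points $y_1,\dots,y_n\in Y$ with $f(U)\subseteq\bigcup_{i=1}^{n}S(y_i,\varepsilon/2)$. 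Put $V_i=U\cap f^{-1}(S(y_i,\varepsilon/2))$ for $i=1,\dots,n$; these sets cover $U$, and for any $p,q\in V_i$ one has $d(f(p),f(q))\le d(f(p),y_i)+d(y_i,f(q))<\varepsilon$. Finally define
$$\mathcal{B}=\{\,B\cap V_i:\ B\in\mathcal{B}_0,\ 1\le i\le n\,\},$$
a finite family of subsets of $X$.

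Then I would check that $\mathcal{B}$ works. Since $\bigcup_{i=1}^{n}V_i=U$, we get $\cup\mathcal{B}=(\cup\mathcal{B}_0)\cap U$, which is a neighbourhood of $x$, being the intersection of the neighbourhood $\cup\mathcal{B}_0$ of $x$ with the open set $U\ni x$. Take any member $B\cap V_i$ of $\mathcal{B}$ and any $p,q\in B\cap V_i$. If $g\in\mathcal{E}$, then $p,q\in B$ yields $d(g(p),g(q))<\varepsilon$; if $g=f$, then $p,q\in V_i$ yields $d(f(p),f(q))<\varepsilon$ by the estimate above. Hence $d(g(p),g(q))<\varepsilon$ for every $g\in\mathcal{E}\cup\{f\}$, so $\mathcal{E}\cup\{f\}$ is finitely equicontinuous at $x$, and since $x$ was arbitrary, it is finitely equicontinuous.

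I do not anticipate a real obstacle here; the single point that makes the argument go through is that ``bounded $\Rightarrow$ totally bounded'' converts the local bound on $f$ near $x$ into a \emph{finite} cover of a neighbourhood of $x$ by sets of oscillation $<\varepsilon$, and intersecting this cover with $\mathcal{B}_0$ keeps the family finite while simultaneously controlling the oscillation of every $g\in\mathcal{E}$ and of $f$ on each piece. Note that some $B\cap V_i$ may be empty, but empty members of $\mathcal{B}$ are harmless in Definition \ref{AAA1}.
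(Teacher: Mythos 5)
Your proof is correct and follows essentially the same route as the paper: intersect the finite family witnessing finite equicontinuity of $\mathcal{E}$ at $x$ with the preimages under $f$ of a finite cover of the bounded set $f(U)$ by small balls, obtained from the hypothesis that bounded sets are totally bounded. Your choice of radius $\varepsilon/2$ even fixes a small slip in the paper, which covers $B(y_0,r)$ by sets of radius less than $\varepsilon$ and hence only controls the oscillation of $f$ by $2\varepsilon$ rather than $\varepsilon$.
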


\begin{proof}
	Let $x\in X$ and let $\varepsilon >0$. Since $\mathcal{E}$ is finitely equicontinuous at $x$ there exists a finite family $\mathcal B=\{B_1,B_2,...,B_n\}$ of subsets of $X$ such that $\cup\mathcal B$ is a neighbourhood of $x$ and such that for every $g\in\mathcal{E}$, for every $B\in\mathcal B$ and for every $p,q\in B$, $d(g(p),g(q))<\varepsilon$. Let $U$ be a  neighbourhood of $x$ such that $U\subset\cup\mathcal B$ and $f(U)$ is bounded. Fix an element $y_0$ of $Y$ and let $r>0$ be such that the set $f(U)$ is a subset of the closed ball $B(y_0,r)$.
	Let $V_1,\ V_2,\ ...,\ V_m$ be a finite open cover of $B(y_0,r)$ where radius of members of this cover is less than $\varepsilon $. For every $j\in\{1,\ 2,\ ...,\ m\}$ put $H_j= f^{-1}(V_j)$. Then $\mathcal H=\{H_j :\ j\in\{1,\ 2,\ ...,\ m\}\}$
	is a finite family of subsets of $X$ where $U\subset\cup\mathcal H$.
	For every $i\in\{1,\ 2,\ ...,\ n\},\ j\in\{1,\ 2,\ ...,\ m\}$ put
	$D_{i,j}=B_i\cap H_j$.
	
	Denote by $\mathcal{D}$ the family containing all nonempty sets $D_{i,j}$ where $i\in\{1,\ 2,\ ...,\ n\},\ j\in\{1,\ 2,\ ...,\ m\}$. Since $U\subset\cup\mathcal B$ and $U\subset\cup\mathcal H$, we have that $\cup\mathcal D$ is a neighbourhood of $x$. Evidently for every $g\in\mathcal{E}\cup \{f\}$, for every $D\in\mathcal D$ and for every $p,q\in D$, $d(g(p),g(q))<\epsilon$.
\end{proof}

\bigskip
\bigskip

\begin{lem}\label{AAA7}
	Let $X$ be a topological space and $(Y,d)$ be a metric space. Let $\{f_\sigma  :\ \sigma \in \Sigma\}$ be a net in $F(X,Y)$ pointwise convergent to $f\in F(X,Y)$ and let the set $\{f_\sigma :\ \sigma \in \Sigma\}$ be finitely equicontinuous. Then $\{f_\sigma :\sigma \in\Sigma\}$ converges to $f$ also in $(F(X,Y),\tau_{UC})$.
\end{lem}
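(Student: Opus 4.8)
The plan is to fix a compact set $K \subseteq X$ and $\varepsilon > 0$ and show that $d(f_\sigma(x), f(x)) < \varepsilon$ for all $x \in K$ once $\sigma$ is large enough. The natural route is the classical Arzelà--Ascoli argument adapted to finite equicontinuity. First I would invoke Lemma \ref{AAA5}: since $\{f_\sigma : \sigma \in \Sigma\}$ is finitely equicontinuous and converges pointwise to $f$, the limit $f$ lies in the $\tau_p$-closure of the net, so the family $\{f_\sigma : \sigma \in \Sigma\} \cup \{f\}$ is finitely equicontinuous. Thus for each $x \in K$ there is a finite family $\mathcal{B}_x$ of subsets of $X$ with $\cup \mathcal{B}_x$ a neighbourhood of $x$ and with $d(g(p), g(q)) < \varepsilon/3$ for every $g$ in the enlarged family, every $B \in \mathcal{B}_x$ and all $p, q \in B$; we may assume $x \in \overline{B}$ for each $B \in \mathcal{B}_x$, discarding the others.

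Next I would extract a finite subcover. The open sets $\operatorname{int}(\cup \mathcal{B}_x)$, $x \in K$, cover $K$, so finitely many $x_1, \dots, x_n$ suffice. Let $\mathcal{C}$ be the (finite) collection of all sets $B$ occurring in some $\mathcal{B}_{x_i}$, and from each $B \in \mathcal{C}$ choose a point $t_B$; if the selection of $\mathcal{B}_{x_i}$ arranged that $x_i \in \overline{B}$, then by pointwise convergence at each $t_B$ and at each $x_i$ there is $\sigma_0 \in \Sigma$ such that for all $\sigma \geq \sigma_0$, $d(f_\sigma(t_B), f(t_B)) < \varepsilon/3$ for every $B \in \mathcal{C}$ and $d(f_\sigma(x_i), f(x_i)) < \varepsilon/3$ for every $i$. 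This is the only place the directedness of $\Sigma$ is used, and it works because $\mathcal{C}$ is finite.

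Now fix $\sigma \geq \sigma_0$ and $x \in K$. Choose $i$ with $x \in \operatorname{int}(\cup \mathcal{B}_{x_i}) \subseteq \cup \mathcal{B}_{x_i}$, hence $x \in B$ for some $B \in \mathcal{B}_{x_i} \subseteq \mathcal{C}$. Then
\begin{align*}
d(f_\sigma(x), f(x)) &\leq d(f_\sigma(x), f_\sigma(t_B)) + d(f_\sigma(t_B), f(t_B)) + d(f(t_B), f(x)) \\
&< \tfrac{\varepsilon}{3} + \tfrac{\varepsilon}{3} + \tfrac{\varepsilon}{3} = \varepsilon,
\end{align*}
using finite equicontinuity of the enlarged family on $B$ for the first and third terms (both $x$ and $t_B$ lie in $B$) and the choice of $\sigma_0$ for the middle term. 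Since $x \in K$ was arbitrary, $f_\sigma \in W(f, K, \varepsilon)$ for all $\sigma \geq \sigma_0$, which is precisely $\tau_{UC}$-convergence.

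The main obstacle is bookkeeping rather than any deep idea: one must be careful that the finite family $\mathcal{B}_x$ is genuinely finite so that only finitely many ``test points'' $t_B$ are needed, and that the sets in $\mathcal{B}_x$ need not be open (so one passes to the interior of $\cup \mathcal{B}_x$ to get a genuine cover of $K$). Note that totally boundedness of $Y$ and local compactness of $X$ are not needed for this lemma; only Lemma \ref{AAA5} and compactness of $K$ enter.
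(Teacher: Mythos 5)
Your proof is correct, and it takes a genuinely different (and arguably cleaner) route than the paper's. The paper argues by contradiction: it assumes the net is not residually in $W(A,2\epsilon)[f]$ for some compact $A$, produces points $x_\sigma\in A$ witnessing the failure, passes to a cluster point $x$ of these, applies finite equicontinuity of $\{f_\sigma\}\cup\{f\}$ (via Lemma \ref{AAA5}) only at that single point, reduces to one set $B$ containing cofinally many $x_\sigma$, and derives a contradiction from the triangle inequality through one test point $x_\eta\in B$. You instead give the direct Arzel\`a--Ascoli covering argument: cover $K$ by the interiors of the finitely many neighbourhoods $\cup\mathcal B_{x_i}$ supplied by finite equicontinuity, select one test point per set of the resulting finite collection $\mathcal C$, use pointwise convergence simultaneously at these finitely many points to get a single index $\sigma_0$, and conclude with the same three-term estimate. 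Both proofs hinge on exactly the same two ingredients (Lemma \ref{AAA5} and the finiteness of the families $\mathcal B_x$), but your version produces an explicit $\sigma_0$ and avoids the subnet/cluster-point bookkeeping that the paper handles somewhat loosely (its ``without loss of generality'' reductions tacitly pass to subnets). Your closing remarks are also accurate: the hypothesis $x_i\in\overline B$ and the evaluation at the $x_i$ are not actually used in your final estimate and could be dropped, and neither local compactness of $X$ nor bounded compactness of $Y$ is needed here.
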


\begin{proof}
	Let $\{f_\sigma :\ \sigma\in \Sigma\}$ be a net in $F(X,Y)$ which $\tau_p$-converges to $f\in F(X,Y)$. We claim that $\{f_\sigma:\ \sigma \in \Sigma\}$ converges to $f$ also in $(F(X,Y),\tau_{UC})$.
	If not there exists a compact set $A$ in $X$ and $\epsilon > 0$ such that $\{f_\sigma:\ \sigma \in \Sigma\}$ is not residually in $W(A,2\epsilon)[f]$.
	
	For every $\sigma \in \Sigma$ there is $\beta_\sigma \ge \sigma$ and $x_\sigma \in A$ such that $d(f(x_\sigma),f_{\beta_\sigma}(x_\sigma)) > \epsilon$. Let $x$ be a cluster point of $\{x_\sigma:\ \sigma \in \Sigma\}$.
	
	By Lemma \ref{AAA5} the family $\mathcal E = \{f_\sigma :\ \sigma \in \Sigma\} \cup \{f\}$ is finitely equicontinuous. There is a finite family $\mathcal B$ of nonempty subsets of $X$  such that $\cup \mathcal B$ is a neighbourhood of $a$ and for every $h \in \mathcal E$, for every $B \in \mathcal B$ and for every $p, q \in B$, $d(h(p),h(q)) < \epsilon/3$.
	
	Without loss of generality we can suppose that $x_\sigma \in \cup \mathcal B$ for every $\sigma \in \Sigma$ and also since $\mathcal B$ is finite, we can suppose that there is $B \in \mathcal B$ such that $x_\sigma \in B$ for every $\sigma \in \Sigma$.
	
	Choose a $x_\eta$. There exist $\sigma \in \Sigma$ such that $d(f(x_\eta),f_\delta(x_\eta)) < \epsilon/3$ for every $\delta \ge \sigma$. Then
	\begin{align*}
	&d(f(x_\sigma),f_{\beta_\sigma}(x_\sigma)) \le\\ \le &d(f(x_\sigma),f(x_\eta)) + d(f(x_\eta),f_{\beta_\sigma}(x_\eta)) + d(f_{\beta_\sigma}(x_\eta),f_{\beta_\sigma}(x_\sigma))<\\ <
	&\epsilon,
	\end{align*}
	a contradiction.
	
	Thus $\{f_\sigma:\ \sigma \in \Sigma\}$ converges to $f$ in $(F(X,Y),\tau_{UC})$.
\end{proof}

\bigskip

Let $\mathcal{E} \subset F(X,Y)$ and let $x\in X$, denote by $\mathcal E[x]$ the set $\{f(x)\in Y:\
f\in \mathcal E\}$.

We say that a subset $\mathcal E$ of $F(X,Y)$ is pointwise bounded provided for every $x \in X$, $\mathcal E[x]$ is bounded in $(Y,d)$.

\bigskip

We say that a metric space $(Y,d)$ is boundedly compact \cite{Be} if every closed bounded subset is compact. Therefore $(Y,d)$ is a locally compact, separable metric space and $d$ is complete. In fact, any locally compact, separable metric space has a compatible metric $d$ such that $(Y,d)$ is a boundedly compact space \cite{Va}.

\bigskip

\begin{thm}\label{AAA8}
	Let $X$ be a locally compact space and $(Y,d)$ be a  boundedly compact metric space. A subset $\mathcal{E} \subset (LB(X,Y),\tau_{UC})$  is compact if and only if it is closed, pointwise bounded and finitely equicontinuous.
\end{thm}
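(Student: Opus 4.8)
The proof splits into the two implications, and the forward direction (compact $\Rightarrow$ closed, pointwise bounded, finitely equicontinuous) is where the real work lies. Closedness is immediate since $(LB(X,Y),\tau_{UC})$ is Hausdorff (Remark \ref{AAA2} tells us $LB(X,Y)$ is genuinely a closed subspace, so ``compact subset'' and ``compact subspace'' agree, and compact subsets of Hausdorff spaces are closed). Pointwise boundedness follows because for fixed $x$ the evaluation map $f\mapsto f(x)$ is $\tau_{UC}$-continuous (singletons are compact), so $\mathcal E[x]$ is a compact, hence bounded, subset of $(Y,d)$. The heart of the forward direction is finite equicontinuity: given $x\in X$ and $\varepsilon>0$, I would use local compactness of $X$ to fix a compact neighbourhood $K$ of $x$, cover $\mathcal E$ by the $\tau_{UC}$-balls $W(f,K,\varepsilon/3)$, $f\in\mathcal E$, extract a finite subcover centered at $f_1,\dots,f_n$, and then exploit that each $f_i$ is locally bounded — so on a possibly smaller compact neighbourhood of $x$ the image $\bigcup_i f_i(K')$ lies in a bounded, hence (bounded compactness!) totally bounded, set. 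Partitioning that finite union of images into $\varepsilon/3$-sets and pulling back through the $f_i$ gives a finite family $\mathcal B$ of subsets whose union is a neighbourhood of $x$; the triangle inequality ($\varepsilon/3+\varepsilon/3+\varepsilon/3$) then shows any $g\in\mathcal E$ oscillates by less than $\varepsilon$ on each member of $\mathcal B$. This is essentially the argument already rehearsed in the proof of Proposition \ref{AAA6}, now applied to the finitely many $f_i$ simultaneously.

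For the converse, assume $\mathcal E$ is closed, pointwise bounded, and finitely equicontinuous, and show $(\mathcal E,\tau_{UC})$ is compact. I would pass to nets: let $\{f_\sigma\}$ be a net in $\mathcal E$. For each $x$, $\{f_\sigma(x)\}$ lies in the bounded set $\mathcal E[x]$, whose closure is compact by bounded compactness; hence by Tychonoff the net has a subnet that is $\tau_p$-convergent to some $f\in F(X,Y)$. By Lemma \ref{AAA5} the $\tau_p$-closure of $\mathcal E$ is still finitely equicontinuous, so in particular $\{f_\sigma\}\cup\{f\}$ (along the subnet) is finitely equicontinuous, and Remark \ref{AAA3} gives $f\in LB(X,Y)$. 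Now Lemma \ref{AAA7} upgrades the $\tau_p$-convergence of the subnet to $\tau_{UC}$-convergence. Finally, since $\mathcal E$ is $\tau_{UC}$-closed (and $\tau_{UC}$ is finer than $\tau_p$, so $f$ is in the $\tau_{UC}$-closure of $\mathcal E$, which equals $\mathcal E$), we conclude $f\in\mathcal E$. Thus every net in $\mathcal E$ has a subnet $\tau_{UC}$-converging to a point of $\mathcal E$, i.e. $\mathcal E$ is compact.

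The main obstacle is the finite-equicontinuity step in the forward direction: one must correctly interleave the compactness of $\mathcal E$ in $\tau_{UC}$ (to get finitely many ``approximating'' functions on a compact neighbourhood) with the local boundedness of those finitely many functions and the bounded-compactness of $Y$ (to chop their combined image into finitely many small pieces), and then verify that the resulting preimage sets really do have union a neighbourhood of $x$ — this last point needs the compact neighbourhood $K$ to be chosen inside the intersection of the relevant open sets, exactly as in Proposition \ref{AAA6}. Everything else is bookkeeping with the triangle inequality and the cited lemmas; I would also remark at the outset that by Remark \ref{AAA3} finite equicontinuity already forces $\mathcal E\subset LB(X,Y)$, so the hypothesis ``$\mathcal E\subset LB(X,Y)$'' in the converse is automatic and the statement is clean.
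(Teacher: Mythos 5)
Your proposal is correct and follows essentially the same route as the paper: Tychonoff plus Lemmas \ref{AAA5} and \ref{AAA7} and closedness for the sufficiency, and for the necessity the covering of $\mathcal E$ by finitely many $W(f_i,A,\varepsilon/3)$ combined with chopping the bounded (hence totally bounded) union $\bigcup_i f_i(A)$ into $\varepsilon/3$-pieces and intersecting the preimages. The only cosmetic difference is that the paper gets $f\in LB(X,Y)$ via Remark \ref{AAA2} rather than Remark \ref{AAA3}, and it does not need to shrink the compact neighbourhood, since a locally bounded function already has bounded image on all of $A=\overline U$.
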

\begin{proof}
	Let $\mathcal{E}$ be a closed, pointwise bounded and finitely equicontinuous subset of $LB(X,Y)$. Since $\mathcal{E}$ is pointwise bounded  and the metric space $(Y,d)$ is  boundedly compact, for every $x\in X$ there exists a compact set $B_x$ such that $\mathcal{E}[x]\subset B_x$. The product $H = \prod_{x\in X}B_x$ is a compact subset of $Y^X =\prod_{x\in X}\{Y_x:\ Y_x = Y\ \text{for every}\  x \in X\}$ with the relative product topology.
	
	Let $\{f_\sigma:\ \sigma \in \Sigma\}$ be a net in $\mathcal E$. Now we will show that there is a function from $ \mathcal E$ which is a cluster point of $\{f_\sigma:\ \sigma \in \Sigma\}$ in $(LB(X,Y),\tau_{UC})$.
	
	There is a function $f \in H$ such that $f$ is a cluster point of $\{f_\sigma:\ \sigma \in \Sigma\}$ in $H$. Without loss of generality we can suppose that $\{f_\sigma:\ \sigma \in \Sigma\}$ converges to $f$ in $H$. By Lemma \ref{AAA7} $\{f_\sigma:\ \sigma \in \Sigma\}$ converges to $f$ in $(F(X,Y),\tau_{UC})$. Since $\mathcal E \subset LB(X,Y)$, by Remark \ref{AAA2} also $f \in LB(X,Y)$ and thus $f \in \mathcal E$, since $\mathcal E$ is closed in $(LB(X,Y),\tau_{UC})$.
	
	For the converse, suppose that $\mathcal{E}$ is a compact subset  of $(LB(X,Y),\tau_{UC})$. The set $\mathcal{E}$ is closed because $(LB(X,Y),\tau_{UC} )$ is a Hausdorff space.
	
	For every $x \in X$ the  evaluation map at $x$,  $e_x: LB(X,Y) \to Y$  defined by $e_x(f)=f(x)$ is continuous with respect to $\tau_p$ topology  in $LB(X,Y)$ \cite{Ke}, and thus it is continuous also with respect to $\tau_{UC}$ topology in $LB(X,Y)$ and so the image $\mathcal{E}[x]$ of $\mathcal{E}$ is compact and thus is bounded.
	
	For the proof of finite equicontinuity of $\mathcal{E}$ we use an idea of the proof of Theorem 5.7 in \cite{HM}. Let $x\in X$ and let $U$ be an open neighbourhood of $x$ such that $\overline U=A$ is compact. Let $\epsilon >0$, we define a finite family $\mathcal B$ of subsets of $X$ as follows.
	By the compactness of $\mathcal{E}$ in  $(LB(X,Y),\tau_{UC})$, there are functions $f_1,\ ...,\ f_n\in\mathcal{E}$ such that
	$$\mathcal{E}\subset W(f_1,A,\frac\epsilon 3)\cup ... \cup W(f_n,A,\frac\epsilon 3).$$
	Since every function from $\mathcal{E}$ is locally bounded and $A$ is compact, for every $i\in\{1,\ 2,\ ...,\ n\}$ the set $f_i(A)$ is bounded. Fix point $y_0$ $Y$ and let $r>0$ be such that the set $f_1(A)\cup ... \cup f_n(A)$ is contained in the closed ball $B(y_0,r)$.
	Let $V_1,\ V_2,\ ...,\ V_m$ be a finite open cover of $B(y_0,r)$, where radius of members of this cover is less than $\frac\epsilon 3$. For every $i\in\{1,\ 2,\ ...,\ n\}$, $j\in\{1,\ 2,\ ...,\ m\}$ put $B_j^i=f^{-1}_i(V_j)$.
	
	Let $\mathcal{F}$ be the finite set of all functions from $\{1,\ 2,\ ...,\ n\}$ to $\{1,\ 2,\ ...,\ m\}$.
	For every $g \in \mathcal{F}$ put $B_g=U\cap B_{g(1)}^1\cap ...\cap B_{g(n)}^n.$
	
	Now put
	$$\mathcal{B}=\{B_g:\ g\in\mathcal{F}\}.$$
	
	We show that $\cup\mathcal B$ is a neighbourhood of $x$. Let $z\in U$, then  there is $g\in\mathcal{F}$ such that $f_i(z)\in V_{g(i)}$ and thus $z\in B_{g(i)}^i$ for every $i\in\{1,\ 2,\ ...,\ n\}$.
	
	Finally, let $f\in\mathcal{E}$, let $B\in\mathcal B$ and let $p,q\in B$. There is a $g\in\mathcal{F}$ such that $B\subset B_{g(1)}^1\cap ...\cap B_{g(n)}^n$.
	Since $\mathcal{E}$ is subset of $W(f_1,A,\frac\epsilon 3)\cup ... \cup W(f_n,A,\frac\epsilon 3)$ there exists  $i\in\{1,\ 2,\ ...,\ n\}$ such that $f\in W(f_i,A,\frac\epsilon 3)$. Thus $d(f(p),f_i(p))<\frac\epsilon 3$ and $d(f(q),f_i(q))<\frac\epsilon 3$. Because $p,q\in B_{g(i)}^i$,   we have that $f_i(p)\in V_{g(i)}$ and $f_i(q)\in V_{g(i)}$.  Then 		
	\begin{align*}
	&d(f(p),f(q))\leq\\ \leq
	&d(f(p),f_i(p))+d(f_i(p)),f_i(q))+
	d(f_i(q)),f(q))<\\ <&\frac\varepsilon 3+\frac\varepsilon 3+\frac\varepsilon 3 =\varepsilon .
	\end{align*}
	Thus $\mathcal{E}$ is finitely equicontinuous.
\end{proof}

\bigskip\bigskip

\begin{thm}\label{AAA9}
	Let $X$ be a locally compact space, $(Y,d)$ be a  boundedly compact metric space and $\mathcal F$ be a closed subset of $(LB(X,Y),\tau_{UC})$.  A subset $\mathcal{E} \subset (\mathcal F,\tau_{UC})$  is compact if and only if it is closed, pointwise bounded and finitely equicontinuous.
\end{thm}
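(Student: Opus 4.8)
The plan is to deduce this statement directly from Theorem \ref{AAA8} by exploiting that both ``compact'' and (thanks to the closedness of $\mathcal F$) ``closed'' are absolute properties here. First I would observe that compactness of a subspace does not depend on the ambient space: since $\mathcal F$ carries the subspace topology inherited from $(LB(X,Y),\tau_{UC})$, a set $\mathcal E\subset\mathcal F$ is compact in $(\mathcal F,\tau_{UC})$ if and only if it is compact in $(LB(X,Y),\tau_{UC})$.

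Next I would note that, because $\mathcal F$ is closed in $(LB(X,Y),\tau_{UC})$, a subset $\mathcal E\subset\mathcal F$ is closed in $(\mathcal F,\tau_{UC})$ if and only if it is closed in $(LB(X,Y),\tau_{UC})$: one direction is the general fact that a closed-in-a-subspace set is the trace of a closed set of the whole space, which here is closed since $\mathcal F$ is; the other direction is trivial. Finally, pointwise boundedness and finite equicontinuity are intrinsic conditions on the family $\mathcal E$ (they make no reference to $\mathcal F$ or to $LB(X,Y)$), so they have the same meaning in either setting.

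Combining these three remarks: $\mathcal E\subset\mathcal F$ is compact in $(\mathcal F,\tau_{UC})$ iff it is compact in $(LB(X,Y),\tau_{UC})$ iff (by Theorem \ref{AAA8}) it is closed in $(LB(X,Y),\tau_{UC})$, pointwise bounded and finitely equicontinuous, iff it is closed in $(\mathcal F,\tau_{UC})$, pointwise bounded and finitely equicontinuous. I do not expect a genuine obstacle here; the only point that needs a word of justification is the equivalence of the two notions of closedness, and that is exactly where the hypothesis that $\mathcal F$ is closed in $(LB(X,Y),\tau_{UC})$ is used.
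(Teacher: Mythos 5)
Your argument is correct and is exactly the (omitted) reasoning the paper intends: the paper states Theorem \ref{AAA9} without proof as an immediate consequence of Theorem \ref{AAA8}, relying precisely on the absoluteness of compactness, the equivalence of the two notions of closedness when $\mathcal F$ is closed in $(LB(X,Y),\tau_{UC})$, and the intrinsic nature of pointwise boundedness and finite equicontinuity.
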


\bigskip

We will now apply Theorem \ref{AAA9} to some special subfamilies of the space of locally bounded functions.

\bigskip

Let $X, Y$ be topological spaces. A function $f: X \to Y$ is quasicontinuous \cite{Ne} at $x \in X$ if for every open set $V \subset Y, f(x) \in V$ and open set $U\subset X$, $x \in U$ there is a nonempty open set $W \subset U$ such that $f(W) \subset V$. If $f$ is quasicontinuous at every point of $X$, we say that $f$ is quasicontinuous.

We say that a subset of $X$ is quasi-open (or semi-open) \cite{Ne} if it is contained in the closure of its interior. Then a function $f: X \to Y$ is quasicontinuous if and only if $f^{-1}(V)$ is quasi-open for every open set $V\subset Y$.

\bigskip

Let $X$  be a topological space and $(Y,d)$ be a metric space. A function $f: X \to Y$ is said to be cliquish (apparentee) \cite{Th} if for any $\epsilon > 0$  and any nonempty open set $U \subset X$ there is a nonempty open set $O \subset U$ such that $d(f(x), f(y)) < \epsilon$ for all
$x, y  \in O$.

It is well known (and
easily seen) that quasicontinuous functions are cliquish, and cliquish functions are continuous at every point of a residual subset of $X$ (and vice versa if $X$ is a Baire space).

The notion of quasicontinuity of real-valued functions of real variable was introduced by Kempisty in \cite{Kem}. The property of quasicontinuity was perhaps the first time used by Baire in \cite{Ba} in the study of points of continuity of separately continuous functions. There is now a rich literature concerning quasicontinuity, see for example \cite{Ne}, \cite{KKM} and references therein.

Quasicontinuous functions are very important in many areas of mathematics. They found applications in the study of minimal usco and minimal cusco maps \cite{HH1, HH3}, in the study of topological groups \cite{Bou, Moors1}, in proofs of some generalizations of Michael's selection theorem \cite{GB},  in the study of extensions of densely defined continuous functions \cite{Ho}, in the study of dynamical systems \cite{CFM}.

\bigskip

It is a well-known fact that a uniform limit of a net of quasicontinuous
functions from a topological space $X$ into a metric space $(Y,d)$ is quasicontinuous \cite{Ne}. It is easy to verify that  a uniform limit of a net of cliquish
functions from a topological space $X$ into a metric space $(Y,d)$ is cliquish.

\bigskip
\bigskip

Baire functions from a topological space $X$ into a metric space $Y$ \cite{Hu, Ha} are  called the analytically
representable functions in \cite{Ku}. By Baire functions  we mean the smallest family of functions
from $X$ to $Y$ containing the continuous functions which is  closed with respect to
the pointwise limits of sequences of functions belonging to it. We divide this family
into Baire classes $B_{\alpha}(X,Y)$, for $0 \le \alpha  < \omega_1$ ($\omega_1$ = the first uncountable
ordinal), by defining
$B_0(X,Y$) = the family of all continuous functions from $X$ to $Y$, and

$B_{\alpha}(X,Y$)  = the family of all limits of pointwise convergent sequences of functions from $\bigcup_{\beta < \alpha} B_{\alpha}(X,Y)$.

\bigskip
\bigskip

\begin{cor}
	Let $X$ be a locally compact space, $(Y,d)$ be a boundedly compact metric space and $Q^*(X,Y)$ be the set of all quasicontinuous locally bounded functions from $X$ to $Y$.  A subset $\mathcal{E} \subset (Q^*(X,Y),\tau_{UC})$  is compact if and only if it is closed, pointwise bounded and finitely equicontinuous.
\end{cor}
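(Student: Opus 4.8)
The plan is to deduce the corollary directly from Theorem \ref{AAA9} by specializing $\mathcal F$ to $Q^*(X,Y)$. For that to work I only need to check one thing: that $Q^*(X,Y)$ is a closed subset of $(LB(X,Y),\tau_{UC})$. Granting this, a subset $\mathcal E$ of $Q^*(X,Y)$ is compact in $(Q^*(X,Y),\tau_{UC})$ exactly when it is compact in $(\mathcal F,\tau_{UC})$ with $\mathcal F=Q^*(X,Y)$, and Theorem \ref{AAA9} then says this happens if and only if $\mathcal E$ is closed, pointwise bounded and finitely equicontinuous, which is precisely the assertion.

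So the real work is to show $Q^*(X,Y)$ is $\tau_{UC}$-closed in $LB(X,Y)$. First I would take $f$ in the $\tau_{UC}$-closure of $Q^*(X,Y)$ in $LB(X,Y)$, witnessed by a net $\{f_\sigma:\sigma\in\Sigma\}\subset Q^*(X,Y)$ with $f_\sigma\to f$ in $\tau_{UC}$ (so $f\in LB(X,Y)$ already), and I would prove $f$ is quasicontinuous at an arbitrary point $x\in X$. This is where local compactness of $X$ is used: I pick an open neighbourhood $U$ of $x$ whose closure $K$ is compact. Since $K$ is compact, $\tau_{UC}$-convergence gives that $f_\sigma\to f$ uniformly on $K$, hence uniformly on $U\subset K$; thus the net of restrictions $\{f_\sigma|_U:\sigma\in\Sigma\}$ converges uniformly to $f|_U$ on the topological space $U$. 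Each $f_\sigma|_U$ is quasicontinuous (quasicontinuity passes to restrictions to open subsets), so by the quoted fact that a uniform limit of a net of quasicontinuous functions into a metric space is quasicontinuous, $f|_U$ is quasicontinuous on $U$, in particular at $x$; and since quasicontinuity at $x$ is a local property — $f|_U$ quasicontinuous at $x$ for some open neighbourhood $U$ of $x$ forces $f$ quasicontinuous at $x$ — we get that $f$ is quasicontinuous at $x$. As $x$ was arbitrary and $f$ is locally bounded, $f\in Q^*(X,Y)$, so $Q^*(X,Y)$ is closed.

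I do not expect a serious obstacle: the argument is short, and the only delicate points are the two elementary ``locality'' observations about quasicontinuity (stability under restriction to open sets, and upgrading pointwise-local quasicontinuity to global quasicontinuity) together with the harmless step of restricting uniform convergence from the compact set $K$ to the open set $U\subseteq K$ — all routine. The substantive content has already been absorbed into Theorem \ref{AAA9} and into the standard stability of quasicontinuity under uniform limits, so the corollary is essentially a packaging statement.
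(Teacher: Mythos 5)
Your proposal is correct and follows the same route as the paper: the corollary is obtained from Theorem \ref{AAA9} once $Q^*(X,Y)$ is known to be closed in $(LB(X,Y),\tau_{UC})$, which the paper simply cites from Proposition 2.1 of \cite{HH4}. Your direct verification of that closedness (restricting to an open set with compact closure, using uniform convergence there, the quoted stability of quasicontinuity under uniform limits, and the locality of quasicontinuity) is sound and merely fills in the cited lemma.
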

\begin{proof}
	By Proposition 2.1 in \cite{HH4} $Q^*(X,Y)$ is a closed subset of $(LB(X,Y),\tau_{UC})$.
\end{proof}

\begin{cor}
	Let $X$ be a locally compact space, $(Y,d)$ be a boundedly compact metric space and $A^*(X,Y)$ be the set of all cliquish locally bounded functions from $X$ to $Y$.  A subset $\mathcal{E} \subset (A^*(X,Y),\tau_{UC})$  is compact if and only if it is closed, pointwise bounded and finitely equicontinuous.
\end{cor}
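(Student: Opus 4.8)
The plan is to deduce the corollary from Theorem~\ref{AAA9} applied with $\mathcal F = A^*(X,Y)$; for that it suffices to check that $A^*(X,Y)$ is a closed subset of $(LB(X,Y),\tau_{UC})$. Granting this, a subset $\mathcal E \subseteq A^*(X,Y)$ is $\tau_{UC}$-closed in $A^*(X,Y)$ if and only if it is $\tau_{UC}$-closed in $LB(X,Y)$, while $\tau_{UC}$-compactness of $\mathcal E$ is intrinsic and does not depend on whether $\mathcal E$ is viewed inside $A^*(X,Y)$ or inside $LB(X,Y)$; so Theorem~\ref{AAA9} yields the stated equivalence verbatim. Note also that $A^*(X,Y)\subseteq LB(X,Y)$ holds by the very definition of $A^*(X,Y)$, so there is nothing to check there.

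So the real content is the claim: if $X$ is locally compact and $f \in LB(X,Y)$ belongs to the $\tau_{UC}$-closure of $A^*(X,Y)$, then $f$ is cliquish. I would argue straight from the definition of cliquishness. Fix $\varepsilon > 0$ and a nonempty open set $U \subseteq X$, pick $x \in U$, and, using local compactness, choose an open neighbourhood $W$ of $x$ with $W \subseteq U$ and $K := \overline W$ compact. Since $f$ lies in the $\tau_{UC}$-closure of $A^*(X,Y)$, the basic neighbourhood $W(f,K,\tfrac\varepsilon 3)$ meets $A^*(X,Y)$, so there is $g \in A^*(X,Y)$ with $d(f(z),g(z)) < \tfrac\varepsilon 3$ for every $z \in K$. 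Because $g$ is cliquish there is a nonempty open $O \subseteq W$ with $d(g(p),g(q)) < \tfrac\varepsilon 3$ for all $p,q \in O$, and then for $p,q \in O \subseteq K$,
$$d(f(p),f(q)) \le d(f(p),g(p)) + d(g(p),g(q)) + d(g(q),f(q)) < \varepsilon.$$
Thus $O$ witnesses cliquishness of $f$ relative to $U$ and $\varepsilon$; as $U$ and $\varepsilon$ were arbitrary, $f$ is cliquish, and since $f$ is already locally bounded, $f \in A^*(X,Y)$. Hence $A^*(X,Y)$ is $\tau_{UC}$-closed in $LB(X,Y)$.

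The only subtle point — and the main obstacle — is the passage from the classical fact ``a $\tau_U$-limit of cliquish functions is cliquish'' (noted in the text) to closedness under $\tau_{UC}$-limits: uniform convergence only on compacta does not by itself provide a global uniform approximant, but cliquishness is a purely local property, so local compactness of $X$ lets one restrict attention to a single compact set $K$ on which $\tau_{UC}$-closeness coincides with genuine uniform closeness. (Alternatively, one could simply cite the cliquish analogue of Proposition~2.1 in \cite{HH4}, exactly as was done for $Q^*(X,Y)$ in the previous corollary.) Once $A^*(X,Y)$ is known to be $\tau_{UC}$-closed in $LB(X,Y)$, the remainder is a formal invocation of Theorem~\ref{AAA9}.
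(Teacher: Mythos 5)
Your proposal is correct and follows the same route as the paper: reduce to Theorem~\ref{AAA9} by checking that $A^*(X,Y)$ is closed in $(LB(X,Y),\tau_{UC})$, a verification the paper leaves to the reader and which you carry out correctly via the local compactness of $X$ and the standard $\varepsilon/3$ argument on a compact neighbourhood.
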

\begin{proof}
	It is easy to verify that $A^*(X,Y)$ is a closed subset of $(LB(X,Y),\tau_{UC})$.
\end{proof}

\begin{cor}
	Let $X$ be a locally compact space and $U^*(X)$ be the set of all  locally bounded upper semicontinuous functions from $X$ to $\Bbb R$.  A subset $\mathcal{E} \subset (U^*(X),\tau_{UC})$  is compact if and only if it is closed, pointwise bounded and finitely equicontinuous.
\end{cor}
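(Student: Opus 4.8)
The plan is to invoke Theorem \ref{AAA9} with $\mathcal F = U^*(X)$ and $(Y,d) = \mathbb R$ with its usual metric, which is boundedly compact by the Heine--Borel theorem. Thus the whole corollary reduces to the single assertion that $U^*(X)$ is a closed subset of $(LB(X,\mathbb R),\tau_{UC})$; once that is established, the characterization of compact subsets of $U^*(X)$ is literally the statement of Theorem \ref{AAA9}.

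To prove closedness, I would take a net $\{f_\sigma : \sigma \in \Sigma\}$ in $U^*(X)$ that $\tau_{UC}$-converges to some $f$ (the limit lies in $LB(X,\mathbb R)$ by Remark \ref{AAA2}, since $LB(X,\mathbb R)$ is $\tau_{UC}$-closed in $F(X,\mathbb R)$), and show $f$ is upper semicontinuous at an arbitrary point $x\in X$. This is where local compactness enters: choose a compact neighbourhood $A$ of $x$ and let $U=\operatorname{int}A$, so $x\in U$. Since $A$ is compact, $\tau_{UC}$-convergence yields $\sup_{z\in A}|f_\sigma(z)-f(z)|\to 0$, i.e.\ genuine uniform convergence on the neighbourhood $A$ of $x$. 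Now the classical fact that a uniform limit of upper semicontinuous functions is upper semicontinuous finishes the argument: given $\varepsilon>0$, fix $\sigma$ with $\sup_A|f_\sigma-f|<\varepsilon/3$; by upper semicontinuity of $f_\sigma$ at $x$ there is an open $W$ with $x\in W\subseteq U$ and $f_\sigma(z)<f_\sigma(x)+\varepsilon/3$ for all $z\in W$; then for $z\in W$,
\[
f(z)\le f_\sigma(z)+\tfrac{\varepsilon}{3}<f_\sigma(x)+\tfrac{2\varepsilon}{3}\le f(x)+\varepsilon .
\]
Since $x$ was arbitrary, $f\in U^*(X)$, so $U^*(X)$ is $\tau_{UC}$-closed in $LB(X,\mathbb R)$.

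There is essentially no hard step: the content is the elementary ``uniform limit of upper semicontinuous functions is upper semicontinuous'' lemma together with the observation that upper semicontinuity is a local property, so that $\tau_{UC}$-convergence — being uniform on the compact neighbourhood $A$ — already suffices at the point $x$. The one point to be careful about is not to confuse upper semicontinuity of $f|_A$ relative to the subspace $A$ with that of $f$ relative to $X$; working inside the open set $U=\operatorname{int}A$ avoids this. (One could also argue directly from Theorem \ref{AAA8}, noting that the $\tau_{UC}$-limit of a net in a finitely equicontinuous, pointwise bounded $\mathcal E\subseteq U^*(X)$ stays in $U^*(X)$ by the same uniform-limit reasoning, but piggybacking on Theorem \ref{AAA9} is shorter.)
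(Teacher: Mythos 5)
Your proposal is correct and follows exactly the paper's route: the paper's entire proof is the one-line remark that $U^*(X)$ is closed in $(LB(X,\mathbb R),\tau_{UC})$, after which Theorem \ref{AAA9} applies. You have merely (and correctly) written out the ``easy to verify'' closedness step via the standard uniform-limit argument on a compact neighbourhood.
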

\begin{proof}
	It is easy to verify that $U^*(X)$ is a closed subset of $(LB(X,\Bbb R),\tau_{UC})$.
\end{proof}

\begin{cor}
	Let $X$ be a locally compact space and $L^*(X)$ be the set of all  locally bounded lower semicontinuous functions from $X$ to $\Bbb R$.  A subset $\mathcal{E} \subset (L^*(X),\tau_{UC})$  is compact if and only if it is closed, pointwise bounded and finitely equicontinuous.
\end{cor}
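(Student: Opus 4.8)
The plan is to deduce this corollary from Theorem \ref{AAA9} in exactly the same way as the three preceding corollaries: once we know that $L^*(X)$ is a closed subset of $(LB(X,\Bbb R),\tau_{UC})$, an application of Theorem \ref{AAA9} with $\mathcal F = L^*(X)$ (and $Y=\Bbb R$ with its usual metric, which is boundedly compact) yields the stated characterization verbatim. So the whole task reduces to verifying the $\tau_{UC}$-closedness of $L^*(X)$.

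To see that $L^*(X)$ is $\tau_{UC}$-closed, I would take a net $\{f_\sigma:\ \sigma\in\Sigma\}$ in $L^*(X)$ that $\tau_{UC}$-converges to some $f$. By Remark \ref{AAA2}, $f\in LB(X,\Bbb R)$, so it only remains to show that $f$ is lower semicontinuous. Fix $x\in X$ and $\varepsilon>0$. Using local compactness of $X$, choose an open neighbourhood $U$ of $x$ whose closure $K=\overline U$ is compact. Since the net converges uniformly on $K$, there is $\sigma\in\Sigma$ with $|f_\sigma(y)-f(y)|<\varepsilon/3$ for every $y\in K$. Because $f_\sigma$ is lower semicontinuous at $x$, there is an open neighbourhood $V\subseteq U$ of $x$ such that $f_\sigma(y)>f_\sigma(x)-\varepsilon/3$ for all $y\in V$. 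Then for every $y\in V$,
$$f(y) > f_\sigma(y)-\tfrac{\varepsilon}{3} > f_\sigma(x)-\tfrac{2\varepsilon}{3} > f(x)-\varepsilon,$$
which shows that $f$ is lower semicontinuous at $x$; since $x$ was arbitrary, $f\in L^*(X)$.

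There is no real obstacle here; the argument is routine. The only point requiring a little care is that $\tau_{UC}$-convergence only controls the functions on compact sets, which is precisely why local compactness of $X$ is invoked to produce the compact neighbourhood $K$. Alternatively, one may simply recall the classical fact that a uniform limit of lower semicontinuous real-valued functions is lower semicontinuous, together with the observation that lower semicontinuity is a local property, so that a locally uniform limit of lower semicontinuous functions is again lower semicontinuous; $\tau_{UC}$-convergence on the locally compact space $X$ is in particular such a locally uniform limit, and the closedness of $L^*(X)$ in $(LB(X,\Bbb R),\tau_{UC})$ follows.
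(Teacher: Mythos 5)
Your proposal is correct and follows the same route as the paper: the paper's entire proof is the one-line observation that $L^*(X)$ is closed in $(LB(X,\Bbb R),\tau_{UC})$, after which Theorem \ref{AAA9} applies. You simply supply the routine verification of that closedness (which the paper leaves as "easy to verify"), and your $\varepsilon/3$ argument for it is sound.
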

\begin{proof}
	It is easy to verify that $L^*(X)$ is a closed subset of $(LB(X,\Bbb R),\tau_{UC})$.
\end{proof}

\begin{cor}
	Let $X$ be a  compact space and $B^*_{\alpha}(X,\Bbb R)$ be the set of all bounded
	functions of Baire class $\alpha$  from $X$ to $\Bbb R$.  A subset $\mathcal{E} \subset (B^*_{\alpha}(X,\Bbb R),\tau_U)$  is compact if and only if it is closed, pointwise bounded and finitely equicontinuous.
\end{cor}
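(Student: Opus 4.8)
The plan is to obtain this corollary directly from Theorem~\ref{AAA9}, applied with $\mathcal F=B^*_\alpha(X,\Bbb R)$ and $(Y,d)=(\Bbb R,|\cdot|)$. Two reductions make this legitimate. First, since $X$ is compact, a function $X\to\Bbb R$ is locally bounded precisely when it is bounded (cover $X$ by finitely many open sets on each of which it is bounded), so $LB(X,\Bbb R)$ is exactly the space of bounded real functions on $X$, and in particular $B^*_\alpha(X,\Bbb R)\subset LB(X,\Bbb R)$. Second, since $X$ itself is compact, the entourages $W(X,\varepsilon)$ (with $K=X$) are $\tau_{UC}$-basic, so $\tau_{UC}$ and $\tau_U$ coincide on $F(X,\Bbb R)$, and the two topologies occurring in the statement agree. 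As $(\Bbb R,|\cdot|)$ is boundedly compact, Theorem~\ref{AAA9} applies as soon as we check that $B^*_\alpha(X,\Bbb R)$ is $\tau_U$-closed in $LB(X,\Bbb R)$.

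To verify that closedness I would note that on $LB(X,\Bbb R)$ the (now finite-valued) metric $\varrho$ generates $\tau_U$, so it suffices to argue with sequences. Let $f_n\in B^*_\alpha(X,\Bbb R)$ with $\varrho(f_n,f)\to0$ for some $f\in LB(X,\Bbb R)$. Then $f$ is bounded (being locally bounded on a compact space), and $f$ is of Baire class $\alpha$ by the classical theorem that a uniformly convergent sequence of functions of Baire class $\alpha$ has its limit in Baire class $\alpha$ (for $\alpha=0$ this is the uniform-limit theorem for continuous functions, and for $\alpha\ge1$ see \cite{Ku}). Hence $f\in B^*_\alpha(X,\Bbb R)$, so this set is closed, and Theorem~\ref{AAA9} yields the conclusion.

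The step carrying the actual mathematical content is the appeal to ``uniform limit of Baire class $\alpha$ is Baire class $\alpha$''; a self-contained argument would pass to a subsequence with $\varrho(f_{n+1},f_n)<2^{-n}$, write $f=f_1+\sum_n(f_{n+1}-f_n)$ as a telescoping series, and verify that such an absolutely and uniformly convergent series of functions of Baire class $\alpha$ is again of Baire class $\alpha$ (the Hausdorff argument). Everything else---the identification $LB(X,\Bbb R)=B(X,\Bbb R)$, the coincidence $\tau_{UC}=\tau_U$, and boundedness of the limit---is routine.
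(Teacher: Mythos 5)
Your proposal is correct and follows essentially the same route as the paper: the paper also deduces the corollary from Theorem~\ref{AAA9} by observing that $B^*_\alpha(X,\Bbb R)$ is $\tau_U$-closed in $LB(X,\Bbb R)$, citing the classical fact that a uniform limit of functions of Baire class $\alpha$ is again of Baire class $\alpha$. You merely spell out the routine reductions (locally bounded $=$ bounded and $\tau_{UC}=\tau_U$ on a compact domain) and sketch the telescoping-series proof of the uniform-limit theorem, which the paper leaves to a reference.
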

\begin{proof}
	By \cite{SSN}  2.10.3  if  $(f_n)_n$ is a sequence in $ B_\alpha(X,\Bbb R)$ which uniformly converges to $f$, then $f \in B_\alpha(X,\Bbb R)$.
\end{proof}

\begin{cor}
	Let $X$ be a  compact metrizable  space, $(Y,d)$ be a boundedly compact metric space  and $B^*_{\alpha}(X,Y)$ be the set of all bounded
	functions of Baire class $\alpha$  from $X$ to $Y$.  A subset $\mathcal{E} \subset (B^*_{\alpha}(X,Y),\tau_U)$  is compact if and only if it is closed, pointwise bounded and finitely equicontinuous.
\end{cor}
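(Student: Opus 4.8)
The plan is to obtain this corollary from Theorem \ref{AAA9} in the same way as the previous ones, the only genuinely new point being that $B^*_\alpha(X,Y)$ is a closed subset of $(LB(X,Y),\tau_{UC})$.

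First I would exploit the compactness of $X$ to make the statement line up with Theorem \ref{AAA9}. Since $X$ is compact, $X$ itself is a compact set and every compact subset is closed in it, so on $F(X,Y)$ the topologies $\tau_{UC}$ and $\tau_U$ coincide; moreover a locally bounded function on a compact space has bounded range (take a finite subcover of $X$ by open sets on which the function is bounded, and note a finite union of bounded sets is bounded), so $LB(X,Y)$ equals the space $B(X,Y)$ of all bounded functions. In particular $B^*_\alpha(X,Y)=B_\alpha(X,Y)\cap B(X,Y)\subseteq LB(X,Y)$, and by Remark \ref{AAA3} any finitely equicontinuous $\mathcal E$ automatically lies in $LB(X,Y)$; thus it is legitimate to invoke Theorem \ref{AAA9} with the closed set $\mathcal F=B^*_\alpha(X,Y)$ --- provided we know it is closed.

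The core step is therefore to verify that $B^*_\alpha(X,Y)$ is $\tau_U$-closed in $B(X,Y)=LB(X,Y)$ (equivalently $\tau_{UC}$-closed, since the two topologies agree here). As $\varrho$ restricts to a finite-valued metric on $B(X,Y)$, it is enough to consider a sequence $(f_n)_n$ in $B^*_\alpha(X,Y)$ with $\varrho(f_n,f)\to 0$, $f\in B(X,Y)$, and to check $f\in B_\alpha(X,Y)$ (boundedness of $f$ being automatic). This is exactly the metric-space-valued analogue of the real-valued closure result \cite{SSN}~2.10.3 used in the preceding corollary, namely: a uniform limit of a sequence of functions of Baire class $\alpha$ from a metrizable space into a metric space is again of Baire class $\alpha$. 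I regard this as the main obstacle; it is classical (cf. \cite{Ku}), and a direct argument runs by passing to a subsequence with $\varrho(f_n,f_{n+1})<2^{-n}$ for all $n$ and exhibiting $f$ as a pointwise limit of functions of class $<\alpha$ assembled from the defining sequences of the $f_n$, the successor and limit values of $\alpha$ being handled separately; alternatively, since a boundedly compact $(Y,d)$ is separable and Polish, one may embed it in the Hilbert cube and reduce, with the usual care concerning the target, to the real-valued statement.

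Once this closure is in hand, $B^*_\alpha(X,Y)$ is a closed subset of $(LB(X,Y),\tau_{UC})$ and Theorem \ref{AAA9}, applied with $\mathcal F=B^*_\alpha(X,Y)$, gives that $\mathcal E\subseteq B^*_\alpha(X,Y)$ is $\tau_{UC}$-compact --- i.e. $\tau_U$-compact, as $X$ is compact --- if and only if $\mathcal E$ is closed, pointwise bounded and finitely equicontinuous, which is the assertion of the corollary.
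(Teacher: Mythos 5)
Your proposal is correct and follows essentially the same route as the paper: the only substantive point is that $B^*_\alpha(X,Y)$ is closed under uniform limits, after which Theorem \ref{AAA9} (with $\tau_{UC}=\tau_U$ on a compact domain) gives the result. The paper handles that single point by citing Kechris, \emph{Classical Descriptive Set Theory}, 24.4~i) for separable metrizable $X$ and $Y$ (which covers compact metrizable $X$ and boundedly compact $Y$), exactly the classical fact you identify as the main obstacle.
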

\begin{proof}
	By \cite{Kech} 24.4 i) if $X,Y$ are separable metrizable spaces, $d$ is a compatible metric for $Y$ and $(f_n)_n$ is a sequence in $ B_\alpha(X,Y)$ which uniformly converges to $f$ with respect to $d$, then $f \in B_\alpha(X,Y)$.
\end{proof}

\section{Necessary conditions of compactness of some families $\mathcal E$}

In this part we will describe the family $\mathcal B$ in the definition of finite equicontinuity for such families  $\mathcal E$ in $LB(X,Y)$ which consist of functions with a dense set of continuity points.

\bigskip
We will use the following definition introduced in  \cite{Hol} for the space of locally bounded functions.

\begin{defn}
	Let $X$ be a topological space and $(Y,d)$ be a metric space. We say that a subset $\mathcal{E}$ of $F(X,Y)$ is densely equiquasicontinuous at a point $x$ of $X$ provided that for every $\epsilon >0$, there exists a finite family $\mathcal B$ of nonempty subsets of $X$ which are either quasi-open or nowhere dense such that $\cup\mathcal B$ is a neighbourhood of $x$ and such that for every $f\in\mathcal{E}$, for every $B\in\mathcal B$ and for every $p,q\in B$, $d(f(p),f(q))<\epsilon$. Then $\mathcal{E}$ is densely equiquasicontinuous provided that it is densely equiquasicontinuous at every point of $X$.
\end{defn}

\bigskip

\begin{rem}
	In Definition 3.1, replacing quasi-open by open gives an equivalent concept.
\end{rem}

\bigskip

\begin{thm}
	Let $X$ be a locally compact space, $(Y,d)$ be a  boundedly compact metric space and $\mathcal E$ be a compact subfamily of $(LB(X,Y),\tau_{UC})$ which consists of functions with a dense set of continuity points. Then $\mathcal E$ is densely equiquasicontinuous.
\end{thm}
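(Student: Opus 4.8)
The plan is to revisit the construction from the proof of Theorem \ref{AAA8}. Fix $x\in X$ and $\varepsilon>0$; by the remark following Definition 3.1 it suffices to produce a \emph{finite} family $\mathcal B$ of nonempty sets, each \emph{open} or nowhere dense, whose union is a neighbourhood of $x$ and such that $d(f(p),f(q))<\varepsilon$ whenever $f\in\mathcal E$, $B\in\mathcal B$ and $p,q\in B$.

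First I would assemble the standard data. Choose an open set $U$ with $x\in U$ and $\overline U=A$ compact. By compactness of $\mathcal E$ in $(LB(X,Y),\tau_{UC})$ there are $f_1,\dots,f_n\in\mathcal E$ with $\mathcal E\subseteq W(f_1,A,\varepsilon/4)\cup\dots\cup W(f_n,A,\varepsilon/4)$, and since each $f_i$ is locally bounded and $A$ is compact, $K_0=\overline{f_1(A)\cup\dots\cup f_n(A)}$ is bounded, hence compact. Here comes the one extra ingredient: fix $r$ with $0<r<\varepsilon/4$, cover $K_0$ by finitely many balls $S(v_1,r/2),\dots,S(v_m,r/2)$, and set $V_k=S(v_k,r)$. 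Then $\{V_k\}_{k=1}^m$ covers $K_0$ and, moreover, for every $z\in A$ and every $i$ there is $k$ with $f_i(z)\in S(v_k,r/2)$, so that $d(f_i(z),y)\geq r/2$ for every $y\notin V_k$. For each $g\colon\{1,\dots,n\}\to\{1,\dots,m\}$ put $B_g=U\cap f_1^{-1}(V_{g(1)})\cap\dots\cap f_n^{-1}(V_{g(n)})$; exactly as in the proof of Theorem \ref{AAA8} (using that the $V_k$ have diameter at most $2r<\varepsilon/2$) one checks $\bigcup_g B_g=U$ and $d(f(p),f(q))<\varepsilon$ for every $f\in\mathcal E$ and all $p,q\in B_g$.

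The crucial step is to confine the ``bad'' part of $U$ to a single closed nowhere dense set. For $y\in X$ let $\mathrm{osc}_{f_i}(y)$ denote the oscillation of $f_i$ at $y$, and put $D=\bigcup_{i=1}^n\{y\in X:\mathrm{osc}_{f_i}(y)\geq r/2\}$. Each $\mathrm{osc}_{f_i}$ is upper semicontinuous and vanishes exactly on the set of continuity points of $f_i$, which is dense by hypothesis; hence each set $\{y:\mathrm{osc}_{f_i}(y)\geq r/2\}$ is closed with empty interior, so $D$ is closed and nowhere dense. I claim $U\setminus D\subseteq\bigcup_g\mathrm{int}(B_g)$. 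Indeed, given $z\in U\setminus D$, choose for each $i$ an index $k_i$ with $f_i(z)\in S(v_{k_i},r/2)$ and define $g$ by $g(i)=k_i$; then $z\in B_g$. If $z\notin\mathrm{int}(B_g)$, then, $U$ being open, $z\notin\mathrm{int}(f_i^{-1}(V_{g(i)}))$ for some $i$, so every neighbourhood of $z$ contains a point $y$ with $f_i(y)\notin V_{g(i)}$; then $d(f_i(z),f_i(y))\geq r/2$, whence $\mathrm{osc}_{f_i}(z)\geq r/2$, contradicting $z\notin D$. Thus $z\in\mathrm{int}(B_g)$, which proves the claim, so $U=\bigcup_g\mathrm{int}(B_g)\cup(U\cap D)$; and since $U=\bigcup_g B_g$ we also have $U\cap D=\bigcup_g(B_g\cap D)$.

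It then remains only to set $\mathcal B=\{\mathrm{int}(B_g):g,\ \mathrm{int}(B_g)\neq\emptyset\}\cup\{B_g\cap D:g,\ B_g\cap D\neq\emptyset\}$. This family is finite; every $\mathrm{int}(B_g)$ is open and every $B_g\cap D$ is nowhere dense (being a subset of $D$); the union of all of them is $U$, a neighbourhood of $x$; and every member is contained in some $B_g$, so $d(f(p),f(q))<\varepsilon$ for all $f\in\mathcal E$ and $p,q$ in that member. Hence $\mathcal E$ is densely equiquasicontinuous at $x$, and since $x$ was arbitrary we are done. The only real difficulty is this last passage: a naive refinement decomposing each $B_g$ as $\mathrm{int}(B_g)$ together with $B_g\setminus\mathrm{int}(B_g)$ leaves a remainder that is meager but in general \emph{not} nowhere dense (e.g.\ for Thomae's function the remainder can be $\mathbb Q$), and it is precisely the overlap in the ball cover — radius $r/2$ against radius $r$ — that forces all the exceptional points into the single closed nowhere dense set $D$.
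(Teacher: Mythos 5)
Your proof is correct, but it reaches the conclusion by a genuinely different decomposition than the one in the paper. The paper (following Theorem 3.8 of \cite{HH4}) builds the open members of $\mathcal B$ ``from below'': for each $f_j$ and each $V_i$ it takes the union $W_i^j$ of neighbourhoods of continuity points $z$ of $f_j$ with $f_j(W_i^j(z))\subseteq V_i$, forms the finite family $\mathcal U$ of intersections $W_{i_1}^1\cap\dots\cap W_{i_n}^n$, and then collects the leftover points of $U(x)$ into sets $P_{i_1}^1(U)\cap\dots\cap P_{i_n}^n(U)$ sitting inside the boundaries $\overline W\setminus W$ of members $W$ of $\mathcal U$, which are nowhere dense. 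To see that these boundaries absorb everything, the paper picks a point of $G\cap C(f_1)\cap\dots\cap C(f_n)$ in an arbitrary neighbourhood $G$, i.e.\ it implicitly uses that the finite intersection of the dense $G_\delta$ sets $C(f_j)$ is still dense (the Baire property of $X$). You instead work ``from above'': you keep the sets $B_g=U\cap\bigcap_i f_i^{-1}(V_{g(i)})$ from the compactness argument, and the shrunken cover ($r/2$ against $r$) guarantees that every point where all $f_i$ have oscillation below $r/2$ is an \emph{interior} point of some $B_g$, so the whole exceptional set is confined to the single closed set $D=\bigcup_i\{\mathrm{osc}_{f_i}\geq r/2\}$, nowhere dense because each $C(f_i)$ is dense. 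This buys two things: you only need each $C(f_i)$ dense separately (no appeal to Baire category or to the $G_\delta$ structure of the continuity sets), and the nowhere dense members of $\mathcal B$ are exhibited explicitly as slices of one oscillation set rather than of boundaries of auxiliary open sets. Your closing remark correctly identifies why the naive split $B_g=\mathrm{int}(B_g)\cup(B_g\setminus\mathrm{int}(B_g))$ fails and why the overlap in the ball cover is the essential extra ingredient. The only cosmetic point worth noting is that the paper's own $\varepsilon/3$ bookkeeping is slightly loose (radius versus diameter of the $V_i$), whereas your $\varepsilon/4$ with $2r<\varepsilon/2$ closes that gap cleanly.
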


\begin{proof}
	We will use an idea from Theorem 3.8 in \cite{HH4}, for a reader's convenience we will write the proof. We show that $\mathcal E$ is densely equiquasicontinuous. Let $x \in X$ and let $\epsilon > 0$. We will define a finite family $\mathcal B$ of  subsets of $X$  which are either open or nowhere dense as follows. There is an open set $U(x)$ containing $x$ such that $A =\overline{U(x)}$ is compact. There are functions $f_1, f_2, ... f_n \in \mathcal E$ such that
	$$\mathcal E \subseteq \cup \{W(A,\epsilon/3)[f_j]:\ j\in\{1,..., n\}\}.$$
	
	Since $f_j$ is locally bounded, for every $j  \in \{1,..., n\}$, $\overline{f_j(A)}$ is compact. Let $\mathcal V = \{V_1, ..., V_m\}$ be a finite open cover of $\cup\{ \overline{f_j(A)}:\ j\in\{1,..., n\} \}$ where radius of members of this cover is less than $\epsilon/3$.
	
	For each $i\in\{1,..., m\}$ and $j\in\{1,..., n\}$, let
	$$H_i^j  = \{z \in U(x) \cap C(f_j): f_j(z) \in V_i\},$$
	where $C(f_j) = \{z \in X:\ f_j\ \text{is continuous at}\ z\}$. By the assumption $C(f_j)$ is dense in $X$. For each $z \in H_i^j$, let $W_i^j(z)$ be an open neighbourhood of $z$ such that $f_j(W_i^j(z)) \subseteq V_i$.  Denote by
	$$W_i^j = \bigcup \{W_i^j(z):\ z \in H_i^j\}.$$
	
	Put
	$$\mathcal U = \{W_{i_1}^1 \cap ... \cap W_{i_n}^n: i_j \in \{1,..., m\}\ \text{for each}\ j \in \{1,..., n\}\}.$$
	
	Let $U \in \mathcal U$ and let
	$$P_i^j(U) = \{z \in \overline U \setminus U:\ f_j(z) \in V_i\}.$$
	
	Now, put
	$$\mathcal P = \{P_{i_1}^1(U) \cap ... \cap P_{i_n}^n(U): i_j \in \{1,..., m\}\ \text{for each}\ j \in \{1,..., n\}, U \in \mathcal U\}.$$
	
	We claim that $U(x)\subset (\cup\mathcal U)\cup (\cup\mathcal P)$. Let $z\in U(x)$, if for every $j \in \{1,..., n\}$ there is  $i_j \in \{1,..., m\}$ such that $z\in W_{i_j}^j$, then $z\in W_{i_1}^1 \cap ... \cap W_{i_n}^n$ and so $z\in\cup\mathcal U$. Let $z\in U(x) \setminus \cup\mathcal U$. 	
	Let $G$ be a  neighbourhood of $z$. Choose some $u\in G\cap C(f_1)\cap ...\cap C(f_n)$. Then for every $j\in \{1,..., n\}$, $u\in W_{i_j}^j$ for some $i_j\in\{1,..., m\}$. Thus  $u\in G\cap W_{i_1}^1 \cap ... \cap W_{i_n}^n$ and so $z\in\overline{\cup\mathcal U}$. Hence there is $U\in\mathcal U$ such that $z\in\overline U\setminus U$.  From this follows that for every $j \in \{1,..., n\}$ there is  $i_j \in \{1,..., m\}$ such that $z\in P_{i_j}^j(U)$ and then $z\in\cup \mathcal P$. Hence $(\cup\mathcal U)\cup (\cup\mathcal P)$ is a neighbourhood of $x$.
	
	Finally, let $\mathcal{B}$ be the family containing all nonempty sets from $\mathcal U$ and $\mathcal P$.
	One can check that for every $f \in \mathcal E$, for every $B\in\mathcal{B}$, and for every $p, q \in B$, $d(f(p),f(q))<\epsilon$.
\end{proof}

\bigskip

\begin{cor} (\cite{Hol, HH4})
	Let $X$ be a locally compact space, $(Y,d)$ be a  boundedly compact metric space and $\mathcal E$ be a compact subfamily of $(Q^*(X,Y),\tau_{UC})$. Then $\mathcal E$ is densely equiquasicontinuous.
\end{cor}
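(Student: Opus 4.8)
The plan is to obtain this corollary as an immediate consequence of the preceding Theorem, the only thing needing verification being that every function belonging to $Q^*(X,Y)$ has a dense set of points of continuity.

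First I would recall that a locally compact space is a Baire space. Next, as noted in the discussion preceding the corollary, every quasicontinuous function $f\colon X\to (Y,d)$ is cliquish. Writing $\omega_f(x)=\inf\{\operatorname{diam} f(U):\ U\ \text{open},\ x\in U\}$ for the oscillation of $f$ at $x$, each set $G_n=\{x\in X:\ \omega_f(x)<1/n\}$ is open, and cliquishness of $f$ (applied with $\varepsilon=1/2n$, say) says exactly that $G_n$ meets every nonempty open subset of $X$; hence each $G_n$ is dense and open. Therefore the set $C(f)=\{z\in X:\ f\ \text{is continuous at}\ z\}=\bigcap_{n}G_n$ is a countable intersection of dense open sets, and so it is dense in $X$ because $X$ is a Baire space. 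Thus every member of $Q^*(X,Y)$ has a dense set of continuity points, and since a compact subspace is compact as a subset of any ambient space and $Q^*(X,Y)\subseteq LB(X,Y)$, the family $\mathcal E$ is a compact subfamily of $(LB(X,Y),\tau_{UC})$ all of whose members have dense continuity sets. The preceding Theorem then yields that $\mathcal E$ is densely equiquasicontinuous.

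There is no genuine obstacle in this argument; it is a direct application of the preceding Theorem. The single point that deserves a little care is the density of $C(f)$, which really does use the Baire property of $X$ — a countable intersection of dense open sets need not be dense in an arbitrary space — and local compactness of $X$, already part of the hypotheses, supplies precisely this. Everything else is routine.
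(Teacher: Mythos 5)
Your proof is correct and follows essentially the same route as the paper: both reduce the corollary to the preceding Theorem by checking that every $f\in Q^*(X,Y)$ has a dense set of continuity points. The only difference is that the paper simply cites this fact (from \cite{SSN} 2.11.3), whereas you prove it via the standard oscillation/Baire-category argument through cliquishness, which is a legitimate and complete substitute.
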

\begin{proof}
	By \cite{SSN}  2.11.3  if $f:X \to Y$ is quasicontinuous, then the set $C(f)$ of the continuity points of $f$ is a dense $G_\delta$-set in $X$.
\end{proof}

\begin{cor}
	Let $X$ be a locally compact space, $(Y,d)$ be a  boundedly compact metric space and $\mathcal E$ be a compact subfamily of $(A^*(X,Y),\tau_{UC})$. Then $\mathcal E$ is densely equiquasicontinuous.
\end{cor}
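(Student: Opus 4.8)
The plan is to obtain this Corollary as an immediate application of the preceding Theorem, once we check that a compact subfamily of $(A^*(X,Y),\tau_{UC})$ automatically consists of functions with a dense set of continuity points. First I would recall that a locally compact space is a Baire space, so $X$ is Baire. Next, for a cliquish function $f\colon X\to (Y,d)$ the sets $\{x\in X:\ f\ \text{has oscillation}<1/n\ \text{at}\ x\}$ are open by definition of the oscillation, and they are dense by the very definition of cliquishness (in every nonempty open set there is a nonempty open subset on which $f$ varies by less than $1/n$ in the metric $d$). Consequently $C(f)$, being the countable intersection of these dense open sets, is a dense $G_\delta$ subset of the Baire space $X$; this is precisely the "residual set of continuity points" remark made in Section~2, and it is also the cliquish analogue of \cite{SSN}~2.11.3.

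Having this, I would argue as follows. Since $A^*(X,Y)\subseteq LB(X,Y)$ and compactness is an absolute property, $\mathcal E$ is a compact subfamily of $(LB(X,Y),\tau_{UC})$. Every $f\in\mathcal E$ is cliquish, hence by the first step $C(f)$ is dense in $X$; thus $\mathcal E$ satisfies the hypotheses of the preceding Theorem. Applying that Theorem yields that $\mathcal E$ is densely equiquasicontinuous, which is the assertion. (One could alternatively avoid invoking the general Theorem and instead cite Corollary~3.4 for $Q^*(X,Y)$ after noting that cliquish functions are, on a dense set, locally approximated by quasicontinuous behaviour, but routing through the Theorem of Section~3 is cleaner.)

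I do not expect a genuine obstacle here: all the real work—constructing the finite family $\mathcal B$ of sets that are each either open or nowhere dense and whose union is a neighbourhood of the given point, and verifying the uniform oscillation bound over all $f\in\mathcal E$—is already carried out in the proof of the preceding Theorem. The only point that requires a little care is the density of $C(f)$: this needs $X$ to be a Baire space, and the hypothesis that $X$ is locally compact is exactly what supplies it. Everything else is bookkeeping.
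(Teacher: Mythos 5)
Your proposal is correct and follows essentially the same route as the paper: reduce to the preceding Theorem by checking that every cliquish function has a dense set of continuity points (the paper simply cites \cite{SSN} 2.11.3 for this fact, whereas you sketch the standard oscillation/Baire-category argument, which does implicitly use that $X$ is locally compact Hausdorff, hence Baire). No gaps.
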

\begin{proof}
	By \cite{SSN}  2.11.3  if $f:X \to Y$ is cliquish, then the set $C(f)$ of the continuity points of $f$ is a dense $G_\delta$-set in $X$.
\end{proof}

\begin{cor}
	Let $X$ be a locally compact space and $\mathcal E$ be a compact subfamily of $(U^*(X),\tau_{UC})$. Then $\mathcal E$ is densely equiquasicontinuous.
\end{cor}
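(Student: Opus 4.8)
The plan is to deduce this corollary from the Theorem of this section, in the same way as the two preceding corollaries: that theorem says that a compact subfamily of $(LB(X,\Bbb R),\tau_{UC})$ all of whose members have a dense set of continuity points is densely equiquasicontinuous, and since $U^*(X)\subset LB(X,\Bbb R)$ by definition, the whole argument reduces to a single point --- that every $f\in U^*(X)$ is continuous at the points of a dense subset of $X$ --- after which the theorem applies with $Y=\Bbb R$ and there is nothing left to do.

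To establish that point I would proceed as follows. Fix $f\in U^*(X)$ and let $\omega_f(x)=\inf\{\,\mathrm{diam}\,f(U):U\ \text{an open neighbourhood of}\ x\,\}$ be its oscillation. For each $n$ the set $G_n=\{x:\omega_f(x)<1/n\}$ is open and $C(f)=\bigcap_n G_n$, so $C(f)$ is a $G_\delta$-set for free; what must be proved is that each $G_n$ is dense. Since $X$ is locally compact it is a Baire space, so it suffices to derive a contradiction from the assumption that the closed set $X\setminus G_n$ contains a nonempty open set $W$; by local boundedness of $f$ we may take $W$ small enough that $f\le M$ on $W$ for some real $M$. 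Put $\delta=1/(3n)$ and $A_j=\{x\in W:f(x)\ge M-j\delta\}$ for $j\in\Bbb N$. By upper semicontinuity each $A_j$ is relatively closed in $W$; the $A_j$ increase and cover $W$ because $f$ is real-valued; and $W$, open in a Baire space, is itself a Baire space, so some $A_j$ has nonempty interior. For the least such $j$, a short argument using upper semicontinuity and the minimality of $j$ --- on $\mathrm{int}\,A_j\setminus A_{j-1}$, which is dense in $\mathrm{int}\,A_j$, $f$ lies between $M-j\delta$ and $M-(j-1)\delta$, and upper semicontinuity then confines $f$ to an interval of length $2\delta$ on some neighbourhood --- produces a nonempty open set on which the oscillation of $f$ is at most $2\delta<1/n$, contradicting $W\subset X\setminus G_n$. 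Hence every $G_n$ is dense open, and $C(f)$ is a dense $G_\delta$-set.

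I expect the density of $C(f)$ for upper semicontinuous $f$ to be the only real obstacle --- it is the classical pointwise discontinuity of semicontinuous functions on Baire spaces, whose one delicate ingredient is the Baire-category step just indicated; everything else is automatic. If one prefers, the same fact can be exploited more economically: it shows $U^*(X)\subset A^*(X,\Bbb R)$ whenever $X$ is locally compact, and then, as $(A^*(X,\Bbb R),\tau_{UC})$ is Hausdorff, a compact $\mathcal E\subset U^*(X)$ is a closed --- hence compact --- subset of $A^*(X,\Bbb R)$, so that the corollary on cliquish functions applies verbatim.
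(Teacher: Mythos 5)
Your proposal is correct and follows essentially the same route as the paper: both reduce the corollary to the theorem on compact families of functions with dense sets of continuity points, the only ingredient being that a locally bounded upper semicontinuous real function on a locally compact (hence Baire) space is continuous on a dense $G_\delta$ set --- the paper simply cites Engelking for this classical fact, whereas you prove it via the standard oscillation/Baire-category argument. Your category argument is sound; the only point to tidy is the case where the least $j$ with $\mathrm{int}\,A_j\neq\emptyset$ is $j=1$ while $A_0=\{x\in W: f(x)\ge M\}$ itself has nonempty interior, but there $f\equiv M$ on an open set and the oscillation already vanishes, so the contradiction is immediate.
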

\begin{proof}
	By \cite{En}  if $f \in  U^*(X)$, then the set $C(f)$ of the continuity points of $f$ is a dense $G_\delta$-set in $X$.
\end{proof}

\begin{cor}
	Let $X$ be a locally compact space and $\mathcal E$ be a compact subfamily of $(L^*(X),\tau_{UC})$. Then $\mathcal E$ is densely equiquasicontinuous.
\end{cor}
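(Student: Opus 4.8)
The plan is to obtain this as an immediate consequence of the theorem on dense equiquasicontinuity of compact subfamilies of $(LB(X,Y),\tau_{UC})$ consisting of functions with a dense set of continuity points, in precisely the same way as the preceding corollary for $U^*(X)$. The structural hypotheses of that theorem hold here: $X$ is locally compact by assumption, and $(\Bbb R,|\,\cdot\,|)$ is a boundedly compact metric space, so it may serve as $(Y,d)$. Moreover $L^*(X)\subseteq LB(X,\Bbb R)$ by definition, and since compactness is an intrinsic property, $\mathcal E$ is compact in $(LB(X,\Bbb R),\tau_{UC})$ as soon as it is compact in $(L^*(X),\tau_{UC})$ (one may alternatively appeal, as in the compactness corollary for $L^*(X)$, to the fact that $L^*(X)$ is closed in $(LB(X,\Bbb R),\tau_{UC})$). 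Thus the only point requiring justification is that every $f\in L^*(X)$ is continuous on a dense subset of $X$.

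For this I would invoke the classical fact, already used for $U^*(X)$ via \cite{En}, that a locally bounded lower semicontinuous real-valued function on a locally compact space has its set $C(f)$ of continuity points dense (in fact a dense $G_\delta$). Alternatively — and this seems to me the cleanest route — one observes that $f$ is lower semicontinuous if and only if $-f$ is upper semicontinuous, that $f$ is locally bounded if and only if $-f$ is, and that $C(f)=C(-f)$; hence the required density statement for $L^*(X)$ is literally the statement already established for $U^*(X)$. With this in hand, the theorem on dense equiquasicontinuity applies verbatim and yields that $\mathcal E$ is densely equiquasicontinuous.

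I expect no genuine obstacle: the result is a routine specialization of the theorem. If one prefers a purely formal deduction that avoids re-examining any proof, note that $f\mapsto -f$ is a homeomorphism of $(LB(X,\Bbb R),\tau_{UC})$ onto itself carrying $L^*(X)$ onto $U^*(X)$; since $d(-f(p),-f(q))=d(f(p),f(q))$, a finite family $\mathcal B$ of subsets of $X$ (quasi-open or nowhere dense) witnesses dense equiquasicontinuity of $\mathcal E$ at a point if and only if it witnesses dense equiquasicontinuity of $-\mathcal E$ at that point. Hence the present corollary is equivalent to the corollary for $U^*(X)$ and follows at once from it.
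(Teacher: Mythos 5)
Your proposal is correct and follows essentially the same route as the paper: one verifies that every $f\in L^*(X)$ has a dense ($G_\delta$) set of continuity points and then applies Theorem 3.3 with $(Y,d)=(\Bbb R,|\cdot|)$, which is boundedly compact. The additional reduction via $f\mapsto -f$ to the $U^*(X)$ case is a harmless alternative justification of the same sub-fact, not a different argument.
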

\begin{proof}
	By \cite{En} if $f \in  L^*(X)$, then the set $C(f)$ of the continuity points of $f$ is a dense $G_\delta$-set in $X$.
\end{proof}

\begin{cor}
	Let $X$ be a  compact space and $\mathcal E$ be a compact subfamily of $(B^*_1(X,\Bbb R),\tau_U)$. Then $\mathcal E$ is densely equiquasicontinuous.
\end{cor}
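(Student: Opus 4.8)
The plan is to obtain this corollary as a direct instance of the (unnumbered) Theorem of this section, applied with $Y=\Bbb R$ and with $\tau_{UC}$ identified with $\tau_U$. So the work consists of checking three things: that the ambient hypotheses of that theorem hold, that $\tau_U=\tau_{UC}$ here, and that every member of $B^*_1(X,\Bbb R)$ has a dense set of continuity points.

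First I would dispose of the routine reductions. A compact space is locally compact, and $(\Bbb R,|\cdot|)$ is boundedly compact, so the standing hypotheses of the Theorem are met. Since $X$ is compact, $X$ itself is a compact subset of $X$, so the sets $W(X,\varepsilon)$ ($\varepsilon>0$) already form a base for $\frak U_{UC}$; as these coincide up to refinement with the basic entourages of the metric $\varrho$, the topologies $\tau_{UC}$ and $\tau_U$ agree on $F(X,\Bbb R)$. Every bounded function is trivially locally bounded, so $B^*_1(X,\Bbb R)\subseteq LB(X,\Bbb R)$; hence a subfamily $\mathcal E$ that is compact in $(B^*_1(X,\Bbb R),\tau_U)$ is, compactness being absolute, also compact in $(LB(X,\Bbb R),\tau_{UC})$.

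The one substantive point, and the step I expect to carry the actual content, is that every $f\in B^*_1(X,\Bbb R)$ has a dense set of continuity points. This is the classical theorem of Baire on functions of the first class: a compact Hausdorff space is a Baire space, and for any Baire space $X$ and any $f\colon X\to\Bbb R$ of Baire class $1$ the set $C(f)$ of continuity points is residual, in particular dense. The standard argument fixes continuous $f_n\to f$ pointwise and, for a nonempty closed $F\subseteq X$ and $\varepsilon>0$, considers the closed sets $F_n=\{x\in F:\ |f_i(x)-f_j(x)|\le\varepsilon\ \text{for all}\ i,j\ge n\}$; these cover $F$, which is a Baire space as a closed subspace of a compact Hausdorff space, so some $F_n$ has nonempty relative interior, on which the oscillation of $f$ is at most $2\varepsilon$; letting $F$ range over closures of basic open sets and $\varepsilon$ over $1/k$ shows that for each $\varepsilon$ the set where the oscillation of $f$ is below $\varepsilon$ is dense open, and intersecting gives $C(f)$ residual. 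Alternatively one may just cite this, e.g. from \cite{SSN} or \cite{Ku}. It is precisely here that the hypothesis ``class $1$'' is used, which is why this corollary has no analogue for higher Baire classes.

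With these observations, $\mathcal E$ is a compact subfamily of $(LB(X,\Bbb R),\tau_{UC})$ all of whose members have a dense set of continuity points, so the Theorem applies verbatim and yields that $\mathcal E$ is densely equiquasicontinuous (the family $\mathcal B$ it produces already consists of open or nowhere dense sets, in accordance with Remark 3.2). The only real obstacle is the Baire-class-$1$ density fact above; everything else is bookkeeping with the definitions of $\tau_U$, $\tau_{UC}$, and local boundedness.
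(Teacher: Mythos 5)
Your proposal is correct and follows essentially the same route as the paper: the paper's proof is a one-line appeal to the fact (cited from \cite{SSN} 2.10.1) that every $f\in B^*_1(X,\Bbb R)$ has a dense $G_\delta$ set of continuity points, after which Theorem 3.3 applies. Your additional bookkeeping (compact $\Rightarrow$ locally compact, $\tau_U=\tau_{UC}$ on a compact domain, bounded $\Rightarrow$ locally bounded) and your sketch of Baire's theorem on first-class functions are accurate but are left implicit in the paper.
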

\begin{proof}
	By \cite{SSN}  2.10.1  if $f \in  B^*_1(X,\Bbb R)$, then the set $C(f)$ of the continuity points of $f$ is a dense $G_\delta$-set in $X$.
\end{proof}

\bigskip
\bigskip

We say that a subset $\mathcal E$ of $F(X,Y)$ is uniformly bounded provided
$\bigcup\{f(X):\ f \in \mathcal E\}$ is contained in a compact set in $(Y,d)$. Notice that if $Y = \Bbb R$ it coincides with the classical notion of uniform boundedness.
\bigskip

If $(Y,d)$ is a metric space, denote by $K(Y)$ the space of all nonempty compact sets in $(Y,d)$.
Let $H_d$ be the Hausdorff metric induced by $d$ on $K(Y)$. It is known that

\bigskip
\centerline{$H_d(A,B) = \inf\{\epsilon > 0:\ A \subset S(B,\epsilon)$ and $B \subset S(A,\epsilon)\}.$}

\bigskip

\begin{prop}
	Let $X$ be a compact topological space and $(Y,d)$ be a boundedly compact metric space.  If $\mathcal E$ is compact in $(LB(X,Y),\tau_U)$, then $\mathcal E$ is uniformly bounded.
\end{prop}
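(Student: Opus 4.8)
The plan is to reduce everything to Theorem~\ref{AAA8}. Since $X$ is compact, the topology $\tau_U$ of uniform convergence and the topology $\tau_{UC}$ of uniform convergence on compacta coincide on $F(X,Y)$ (the single compact set $X$ generates the uniformity $\frak U_{UC}$), so $\mathcal E$ is a compact subset of $(LB(X,Y),\tau_{UC})$. By Theorem~\ref{AAA8}, $\mathcal E$ is then pointwise bounded and finitely equicontinuous, and these are the only two properties I intend to use.

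Next I would convert the (local) finite equicontinuity into a finite ``skeleton'' of $X$. Apply finite equicontinuity with $\varepsilon=1$: for each $x\in X$ there is a finite family $\mathcal B_x$ of subsets of $X$ with $\cup\mathcal B_x$ a neighbourhood of $x$ and $d(f(p),f(q))<1$ whenever $f\in\mathcal E$, $B\in\mathcal B_x$ and $p,q\in B$. Shrinking $\cup\mathcal B_x$ to an open set $U_x\ni x$ and using compactness of $X$, finitely many $U_{x_1},\dots,U_{x_k}$ cover $X$; let $\mathcal B=\bigcup_{i=1}^{k}\mathcal B_{x_i}$ and let $B_1,\dots,B_N$ be its nonempty members. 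Then every point of $X$ lies in some $B_j$, and for each $j$ (since $B_j\in\mathcal B_{x_i}$ for some $i$) one still has $d(f(p),f(q))<1$ for all $f\in\mathcal E$ and all $p,q\in B_j$. Finally choose a representative $b_j\in B_j$ for each $j$.

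The last step assembles the global bound. Fix any $y_0\in Y$. By pointwise boundedness each set $\mathcal E[b_j]$ is bounded, so there is $R>0$ with $\{f(b_j):\ f\in\mathcal E,\ 1\le j\le N\}\subset B(y_0,R)$. Given $f\in\mathcal E$ and $x\in X$, pick $j$ with $x\in B_j$; then $d(f(x),y_0)\le d(f(x),f(b_j))+d(f(b_j),y_0)<1+R$. Hence $\bigcup\{f(X):\ f\in\mathcal E\}\subset B(y_0,R+1)$, and since $(Y,d)$ is boundedly compact this closed bounded ball is compact, so $\mathcal E$ is uniformly bounded.

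There is no serious obstacle; the one point that deserves care is the passage from the purely local data supplied by finite equicontinuity to a single uniform estimate valid on all of $X$ for all $f\in\mathcal E$ at once — this is exactly where compactness of $X$ (finite subcover) and pointwise boundedness (finitely many $b_j$) must be combined — while bounded compactness of $Y$ enters only at the very end, to upgrade ``contained in a bounded set'' to ``contained in a compact set''. Alternatively one could avoid Theorem~\ref{AAA8} altogether: on a compact $X$ every locally bounded function is bounded, the assignment $f\mapsto\overline{f(X)}$ is $1$-Lipschitz from $(\mathcal E,\varrho)$ into $(K(Y),H_d)$, hence $\{\overline{f(X)}:\ f\in\mathcal E\}$ is $H_d$-compact, so $H_d$-bounded, and therefore its union is $d$-bounded in $Y$.
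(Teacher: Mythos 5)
Your main argument is correct, but it takes a genuinely different route from the paper. You pass through Theorem~\ref{AAA8}: compactness of $\mathcal E$ in $\tau_U=\tau_{UC}$ gives pointwise boundedness and finite equicontinuity, and you then run a classical Arzel\`a--Ascoli-style chaining argument --- a finite subcover of $X$ by the neighbourhoods $\cup\mathcal B_{x_i}$, oscillation $<1$ on each of the finitely many pieces $B_j$, representatives $b_j$, and pointwise boundedness at the $b_j$ --- to place $\bigcup_{f\in\mathcal E}f(X)$ inside one closed ball, which is compact by bounded compactness of $Y$. The paper instead works in the hyperspace: it shows the map $M(f)=\overline{f(X)}$ from $(LB(X,Y),\tau_U)$ into $(K(Y),H_d)$ is continuous, so $M(\mathcal E)$ is $H_d$-compact, and then invokes Michael's theorem (the union of a compact family of compact sets is compact, via the coincidence of the Vietoris and Hausdorff metric topologies on $K(Y)$) to conclude directly. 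Your ``alternative'' sketch at the end is essentially the paper's proof, except that you stop at $H_d$-boundedness of the image and would still need bounded compactness to upgrade a bounded union to a compact one. What your main route buys is self-containedness --- it uses only the paper's own Theorem~\ref{AAA8} plus a finite-cover argument, with a transparent accounting of where compactness of $X$, pointwise boundedness, and bounded compactness of $Y$ each enter; what the paper's route buys is brevity and the stronger conclusion that the union is compact without a final appeal to closed balls. Both arguments are sound.
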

\begin{proof}
	The mapping
	$$M: (LB(X,Y),\tau_U) \to (K(Y),H_d)$$
	defined by $M(f) = \overline{f(X)}$ is continuous.
	
	Let $\{f_n:\ n \in\Bbb N\}$ converge to $f$ in $(LB(X,Y),\tau_U)$. We will show that $\{\overline{f_n(X)}:\ n \in\Bbb N\}$ converges to $\overline{f(X)}$ in $(K(Y),H_d)$. Let $\epsilon > 0$. There is $n_0 \in\Bbb N$ such that
	
	\bigskip
	\centerline{$d(f_n(x),f(x))< \epsilon/2$ for every $x \in X$ and for every $n \ge n_0$.}
	
	\bigskip
	We claim that $H_d(\overline{f_n(X)},\overline{f(X)}) < \epsilon$ for every $n\ge n_0$. Let $n \ge n_0$ and let $y \in \overline{f_n(X)}$. There is $x \in X$ such that $d(y,f_n(x)) < \epsilon/2$. Then $d(y,f(x)) < \epsilon$, i.e. $\overline{f_n(X)} \subset S(f(X),\epsilon)$. The opposite inclusion is similar.
	
	Thus $M(\mathcal E)$ is compact in $(K(Y),H_d)$. Then $\bigcup_{f \in \mathcal E} \overline{f(X)}$ is compact in $Y$ \cite{Mi}. (The Vietoris topology and the Hausdorff metric topology coincide on $K(Y)$.)
\end{proof}

\bigskip
\bigskip

If $X$ is a compact space and $(Y,d)$ is a metric space, then $\tau_{UC} = \tau_U$ on $F(X,Y)$. Since $(LB(X,Y),\tau_U)$ is metrizable, we have the following interesting consequences of our results:

\bigskip

\begin{cor}
	Let $X$ be a compact space and $(Y,d)$ be a boundedly compact metric space. If $\{f_n:\ n \in\Bbb N\}$ is a sequence of uniformly bounded finitely equicontinuous quasicontinuous functions from $X$ to $Y$, then there is a uniformly convergent subsequence $\{f_{n_k}:\ k \in\Bbb N\}$.
\end{cor}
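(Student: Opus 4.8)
The plan is to pass to the closure of the sequence in $LB(X,Y)$, recognise it as a compact subset by Theorem~\ref{AAA8}, and then exploit metrizability to extract a uniformly convergent subsequence.

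Write $\mathcal E:=\{f_n:n\in\Bbb N\}$. Since $X$ is compact it is locally compact, and, as observed just before this corollary, $\tau_{UC}=\tau_U$ on $F(X,Y)$ while $(LB(X,Y),\tau_U)$ is metrizable. By Remark~\ref{AAA3} the finitely equicontinuous family $\mathcal E$ is contained in $LB(X,Y)$, and by Remark~\ref{AAA2} the same is true of its closure $\overline{\mathcal E}$ taken in $(LB(X,Y),\tau_U)=(LB(X,Y),\tau_{UC})$.

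Next I would verify the three conditions of Theorem~\ref{AAA8} for $\overline{\mathcal E}$. It is closed by construction. It is pointwise bounded: uniform boundedness of $\mathcal E$ provides a compact set $K\subseteq Y$ with $f_n(X)\subseteq K$ for every $n$, and since each $f\in\overline{\mathcal E}$ is in particular a $\tau_p$-limit of members of $\mathcal E$, we get $f(x)\in\overline K=K$ for all $x$, so $\overline{\mathcal E}[x]\subseteq K$. It is finitely equicontinuous: $\mathcal E$ is finitely equicontinuous by hypothesis, so by Lemma~\ref{AAA5} its $\tau_p$-closure is finitely equicontinuous; since $\tau_p\subseteq\tau_U$, every $\tau_p$-closed set is $\tau_U$-closed, hence $\overline{\mathcal E}$ is contained in the $\tau_p$-closure of $\mathcal E$, and any subfamily of a finitely equicontinuous family is again finitely equicontinuous.

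By Theorem~\ref{AAA8}, $\overline{\mathcal E}$ is then compact in $(LB(X,Y),\tau_{UC})=(LB(X,Y),\tau_U)$; being metrizable, it is sequentially compact, so the sequence $\{f_n\}\subseteq\overline{\mathcal E}$ admits a $\tau_U$-convergent, i.e.\ uniformly convergent, subsequence $\{f_{n_k}\}$. (Its limit is moreover quasicontinuous by the uniform limit theorem for quasicontinuous functions, so in fact $\{f_{n_k}\}$ converges in $Q^*(X,Y)$; alternatively one may run the whole argument inside the $\tau_{UC}$-closed subspace $Q^*(X,Y)$ and invoke Theorem~\ref{AAA9} instead.) I do not anticipate a genuine obstacle, since the statement is essentially a repackaging of Theorem~\ref{AAA8}; the single point deserving care is the transfer of finite equicontinuity from $\mathcal E$ to $\overline{\mathcal E}$, which rests on Lemma~\ref{AAA5} together with the inclusion $\tau_p\subseteq\tau_U$.
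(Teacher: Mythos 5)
Your argument is correct and is exactly the route the paper intends: the corollary is stated as an immediate consequence of Theorem~\ref{AAA8} (via Lemma~\ref{AAA5} and Remarks~\ref{AAA2}--\ref{AAA3}) together with the observation that for compact $X$ one has $\tau_{UC}=\tau_U$ and $(LB(X,Y),\tau_U)$ is metrizable, hence compactness of $\overline{\mathcal E}$ yields sequential compactness. Your write-up merely makes explicit the verification of the three hypotheses of Theorem~\ref{AAA8} that the paper leaves to the reader, and correctly notes that quasicontinuity plays no role beyond identifying the limit.
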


\begin{cor}
	Let $X$ be a compact space and $(Y,d)$ be a boundedly compact metric space. If $\{f_n:\ n \in\Bbb N\}$ is a sequence of uniformly bounded finitely equicontinuous cliquish functions from $X$ to $Y$, then there is a uniformly convergent subsequence $\{f_{n_k}:\ k \in\Bbb N\}$.
\end{cor}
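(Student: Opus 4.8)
The plan is to read this off from the compactness characterization (Theorem \ref{AAA8}) together with the metrizability of $(LB(X,Y),\tau_U)$ noted in the text. Set $\mathcal{E}_0=\{f_n:\ n\in\Bbb N\}$. Since the $f_n$ are uniformly bounded, there is a compact set $K\subseteq Y$ with $f_n(X)\subseteq K$ for all $n$; in particular $\mathcal{E}_0$ is pointwise bounded, and by hypothesis it is finitely equicontinuous, so by Remark \ref{AAA3} we have $\mathcal{E}_0\subseteq LB(X,Y)$. Moreover the $f_n$ are cliquish, hence $\mathcal{E}_0\subseteq A^*(X,Y)$, which is closed in $(LB(X,Y),\tau_{UC})$.

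Next I would pass to the closure: let $\mathcal{E}$ be the $\tau_{UC}$-closure of $\mathcal{E}_0$ in $LB(X,Y)$ (equivalently, in $A^*(X,Y)$). Then $\mathcal{E}$ is closed by construction; it is pointwise bounded because for each $x\in X$ the evaluation $e_x$ is $\tau_{UC}$-continuous, so $\mathcal{E}[x]\subseteq\overline{\mathcal{E}_0[x]}\subseteq\overline{K}=K$; and it is finitely equicontinuous because, $\tau_p$ being coarser than $\tau_{UC}$, the $\tau_{UC}$-closure $\mathcal{E}$ is contained in the $\tau_p$-closure of $\mathcal{E}_0$, which is finitely equicontinuous by Lemma \ref{AAA5}, and any subfamily of a finitely equicontinuous family is finitely equicontinuous. (Alternatively, Lemma \ref{AAA7} shows the $\tau_p$- and $\tau_{UC}$-closures of $\mathcal{E}_0$ coincide.)

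Now apply Theorem \ref{AAA8} (or Theorem \ref{AAA9} with $\mathcal{F}=A^*(X,Y)$): $\mathcal{E}$ is compact in $(LB(X,Y),\tau_{UC})$. Since $X$ is compact, $\tau_{UC}=\tau_U$ on $F(X,Y)$, and $(LB(X,Y),\tau_U)$ is metrizable (the extended-valued metric $\varrho$ is finite on locally bounded functions over a compact domain). Hence $\mathcal{E}$, being compact and metrizable, is sequentially compact, so the sequence $(f_n)_n\subseteq\mathcal{E}$ admits a subsequence $(f_{n_k})_k$ converging in $\tau_U=\tau_{UC}$, i.e. converging uniformly; its limit is again cliquish, being a uniform limit of cliquish functions.

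There is no genuine obstacle here — the corollary is essentially a restatement of Theorem \ref{AAA8} in the compact, metrizable setting. The only steps needing care are the passage to the closure (checking that finite equicontinuity and pointwise boundedness survive, for which the containment of the $\tau_{UC}$-closure in the $\tau_p$-closure, via Lemma \ref{AAA5} or Lemma \ref{AAA7}, is the point), and confirming that metrizability is indeed available so that compactness upgrades to the existence of a convergent \emph{sub-sequence}.
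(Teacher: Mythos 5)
Your proposal is correct and follows exactly the route the paper intends: the corollary is stated without proof as a consequence of Theorem \ref{AAA8} (or \ref{AAA9}) combined with the remark that $\tau_{UC}=\tau_U$ on a compact domain and that $(LB(X,Y),\tau_U)$ is metrizable, so compactness of the closure yields sequential compactness. Your verification that the $\tau_{UC}$-closure remains pointwise bounded and finitely equicontinuous (via Lemma \ref{AAA5} and the inclusion of the $\tau_{UC}$-closure in the $\tau_p$-closure) supplies precisely the details the paper leaves implicit.
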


\begin{cor}
	Let $X$ be a compact space. If $\{f_n:\ n \in\Bbb N\}$ is a sequence of uniformly bounded finitely equicontinuous upper semicontinuous functions from $X$ to $\Bbb R$, then there is a uniformly convergent subsequence $\{f_{n_k}:\ k \in\Bbb N\}$.
\end{cor}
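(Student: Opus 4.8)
The plan is to exhibit $\{f_n : n \in \Bbb N\}$ as a sequence inside a compact, metrizable subspace of $F(X,\Bbb R)$ and then invoke sequential compactness. Since $X$ is compact we have $\tau_{UC} = \tau_U$ on $F(X,\Bbb R)$, the space $(LB(X,\Bbb R),\tau_U)$ is metrizable, and the set $U^*(X)$ of locally bounded upper semicontinuous functions is a $\tau_U$-closed subset of $LB(X,\Bbb R)$ (this is recorded in the proof of the corollary on $U^*(X)$ above). Note also that each $f_n$, being uniformly bounded, is bounded and hence locally bounded, so $f_n \in U^*(X)$; moreover finite equicontinuity of $\{f_n\}$ already guarantees $\{f_n\} \subset LB(X,\Bbb R)$ by Remark \ref{AAA3}.

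Let $\mathcal E$ denote the closure of $\{f_n : n \in \Bbb N\}$ in $(F(X,\Bbb R),\tau_U)$. I would check that $\mathcal E$ meets the hypotheses of Theorem \ref{AAA9} applied with the closed family $\mathcal F = U^*(X)$. It is closed by construction, and since $U^*(X)$ is $\tau_U$-closed and contains every $f_n$, we get $\mathcal E \subseteq U^*(X)$. Because $\tau_U$ is finer than $\tau_p$, the set $\mathcal E$ is contained in the $\tau_p$-closure of $\{f_n\}$; hence Lemma \ref{AAA5} yields that $\mathcal E$ is finitely equicontinuous. For pointwise boundedness: uniform boundedness of $\{f_n\}$ means $\bigcup_n f_n(X)$ lies in some compact, hence closed, set $K \subseteq \Bbb R$; each $g \in \mathcal E$ is a $\tau_p$-limit of a net drawn from $\{f_n\}$, so for every $x \in X$ we have $g(x) \in K$, whence $\mathcal E[x] \subseteq K$ is bounded.

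By Theorem \ref{AAA9} the family $\mathcal E$ is compact in $(U^*(X),\tau_{UC}) = (U^*(X),\tau_U)$. This space, being a subspace of the metrizable space $(LB(X,\Bbb R),\tau_U)$, is metrizable, so $\mathcal E$ is sequentially compact. As $\{f_n : n \in \Bbb N\} \subseteq \mathcal E$, there is a subsequence $\{f_{n_k} : k \in \Bbb N\}$ that converges in $\tau_U$, i.e. converges uniformly, which is the assertion. I do not expect any genuine obstacle here; the one point deserving attention is that passing to the $\tau_U$-closure preserves both finite equicontinuity and membership in $U^*(X)$, and these are supplied respectively by Lemma \ref{AAA5} (via $\tau_U \supseteq \tau_p$) and by the $\tau_U$-closedness of $U^*(X)$.
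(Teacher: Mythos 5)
Your argument is correct and is exactly the route the paper intends: these corollaries are stated as immediate consequences of Theorem \ref{AAA9} (applied to the closed family $U^*(X)$, with $\Bbb R$ boundedly compact) together with the facts that $\tau_{UC}=\tau_U$ for compact $X$ and that $(LB(X,\Bbb R),\tau_U)$ is metrizable, so compactness of the $\tau_U$-closure yields sequential compactness. Your verification that the closure remains finitely equicontinuous (via Lemma \ref{AAA5} and $\tau_U\supseteq\tau_p$) and pointwise bounded is the right way to fill in the details the paper leaves implicit.
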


\begin{cor}
	Let $X$ be a compact space. If $\{f_n:\ n \in\Bbb N\}$ is a sequence of uniformly bounded finitely equicontinuous lower semicontinuous functions from $X$ to $\Bbb R$, then there is a uniformly convergent subsequence $\{f_{n_k}:\ k \in\Bbb N\}$.
\end{cor}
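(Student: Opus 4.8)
The plan is to package the sequence into a compact subset of $L^*(X)$ and then invoke metrizability. Write $\mathcal E_0 = \{f_n:\ n \in \Bbb N\}$. Since $X$ is compact, every uniformly bounded function on $X$ is bounded, hence locally bounded; combined with lower semicontinuity this shows $\mathcal E_0 \subseteq L^*(X)$. Uniform boundedness furnishes a closed bounded (equivalently, compact) set $B \subseteq \Bbb R$ with $f_n(X) \subseteq B$ for all $n$, so in particular $\mathcal E_0$ is pointwise bounded; and by hypothesis $\mathcal E_0$ is finitely equicontinuous.

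I would then take $\mathcal E$ to be the closure of $\mathcal E_0$ in $(LB(X,\Bbb R),\tau_{UC})$. Because $L^*(X)$ is $\tau_{UC}$-closed in $LB(X,\Bbb R)$, we have $\mathcal E \subseteq L^*(X)$. The set $\mathcal E$ is closed; it is pointwise bounded since $\mathcal E[x] \subseteq \overline{\mathcal E_0[x]} \subseteq B$ for each $x \in X$; and it is finitely equicontinuous because $\tau_{UC}$ refines $\tau_p$, so $\mathcal E$ sits inside the $\tau_p$-closure of $\mathcal E_0$, which is finitely equicontinuous by Lemma \ref{AAA5}, and subsets of finitely equicontinuous families are again finitely equicontinuous. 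Hence, by Theorem \ref{AAA9} applied with $\mathcal F = L^*(X)$ (equivalently, by the corollary characterizing compactness in $L^*(X)$), the family $\mathcal E$ is compact in $(L^*(X),\tau_{UC})$.

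To finish, recall that for $X$ compact $\tau_{UC}$ coincides with $\tau_U$ on $F(X,\Bbb R)$, and $\tau_U$ is induced by the metric $\varrho$, which is finite-valued on $\mathcal E$ since all functions there take values in $B$. Thus $(\mathcal E,\tau_U)$ is a compact metric space, hence sequentially compact. Applied to the sequence $\{f_n\}_n \subseteq \mathcal E$, this yields a subsequence $\{f_{n_k}\}_k$ converging in $\tau_U$, that is, uniformly, to some $f \in \mathcal E \subseteq L^*(X)$ (so the limit is in fact again lower semicontinuous).

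I do not anticipate a serious obstacle: the argument is an assembly of results already established. The one point deserving care is the verification of finite equicontinuity of the closure $\mathcal E$: one should not argue directly about the $\tau_{UC}$-closure, but instead observe that it is contained in the $\tau_p$-closure and apply Lemma \ref{AAA5} there. (The upper semicontinuous case is identical, with $U^*(X)$ in place of $L^*(X)$.)
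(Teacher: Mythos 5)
Your argument is correct and is exactly the route the paper intends: close up the sequence, verify closedness, pointwise boundedness and finite equicontinuity of the closure (via Lemma~\ref{AAA5} applied to the $\tau_p$-closure), invoke the compactness characterization in $L^*(X)$, and use that $\tau_{UC}=\tau_U$ is metrizable for $X$ compact to pass from compactness to sequential compactness. The paper leaves all of this implicit, merely citing metrizability; your write-up supplies the same steps in detail, including the one genuinely delicate point (handling the closure's finite equicontinuity through the $\tau_p$-closure).
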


\begin{cor}
	Let $X$ be a compact space. If $\{f_n:\ n \in\Bbb N\}$ is a sequence of uniformly bounded finitely equicontinuous functions of Baire class $\alpha$ from $X$ to $\Bbb R$, then there is a uniformly convergent subsequence $\{f_{n_k}:\ k \in\Bbb N\}$.
\end{cor}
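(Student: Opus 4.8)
The plan is to apply Theorem \ref{AAA9} with $\mathcal F = B^*_\alpha(X,\Bbb R)$, the set of bounded functions of Baire class $\alpha$ from $X$ to $\Bbb R$, and then to pass from compactness to sequential compactness using metrizability. Since $X$ is compact we have $\tau_U = \tau_{UC}$ on $F(X,\Bbb R)$, so it is enough to extract a $\tau_{UC}$-convergent subsequence. Set $\mathcal E$ to be the closure of $\{f_n:\ n\in\Bbb N\}$ in $(LB(X,\Bbb R),\tau_{UC})$. By \cite{SSN} 2.10.3 a uniform limit of a sequence in $B_\alpha(X,\Bbb R)$ again lies in $B_\alpha(X,\Bbb R)$, and since each $f_n$ is bounded (being a member of a uniformly bounded family) this shows both that $B^*_\alpha(X,\Bbb R)$ is closed in $(LB(X,\Bbb R),\tau_{UC})$, as Theorem \ref{AAA9} requires, and that $\mathcal E \subseteq B^*_\alpha(X,\Bbb R)$.

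Next I would verify the three hypotheses of Theorem \ref{AAA9} for $\mathcal E$. It is closed by construction. It is pointwise bounded: uniform boundedness gives a compact set $L\subseteq\Bbb R$ with $f_n(X)\subseteq L$ for every $n$, and since $\tau_p\subseteq\tau_{UC}$ every member of $\mathcal E$ is in particular a $\tau_p$-limit of the $f_n$ and hence also takes all its values in $L$, so each $\mathcal E[x]\subseteq L$ is bounded. It is finitely equicontinuous: the family $\{f_n:\ n\in\Bbb N\}$ is finitely equicontinuous by hypothesis, so by Lemma \ref{AAA5} its $\tau_p$-closure is finitely equicontinuous; since $\tau_{UC}$ is finer than $\tau_p$, the $\tau_{UC}$-closure $\mathcal E$ is contained in that $\tau_p$-closure, and any subset of a finitely equicontinuous family is finitely equicontinuous.

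By Theorem \ref{AAA9}, $\mathcal E$ is compact in $(B^*_\alpha(X,\Bbb R),\tau_{UC}) = (B^*_\alpha(X,\Bbb R),\tau_U)$. Finally, because $X$ is compact the space $(LB(X,\Bbb R),\tau_U)$ is metrizable (the metric $\varrho$ is finite on the uniformly bounded set $\mathcal E$), so the compact set $\mathcal E$ is sequentially compact. The sequence $(f_n)_n$ lies in $\mathcal E$, hence has a subsequence $(f_{n_k})_k$ converging in $(\mathcal E,\tau_U)$, that is, converging uniformly. The only point that needs care is the bookkeeping of the three topologies $\tau_p\subseteq\tau_{UC}=\tau_U$ (the equality holding because $X$ is compact), so that both the inclusion $\mathcal E\subseteq B^*_\alpha(X,\Bbb R)$ and the finite equicontinuity of $\mathcal E$ survive the passage to the closure; beyond that the argument is a direct application of the machinery already established.
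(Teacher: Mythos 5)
Your proposal is correct and follows essentially the route the paper intends: form the $\tau_{UC}$-closure of the sequence, check closedness, pointwise boundedness and finite equicontinuity (via Lemma \ref{AAA5}), invoke the compactness characterization with $B^*_\alpha(X,\Bbb R)$ closed under uniform limits by \cite{SSN} 2.10.3, and pass from compactness to sequential compactness using the metrizability of $(LB(X,\Bbb R),\tau_U)$ together with $\tau_U=\tau_{UC}$ on a compact domain. Your bookkeeping of the three topologies and of why the closure stays inside $B^*_\alpha(X,\Bbb R)$ is accurate, so nothing is missing.
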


\bigskip
\bigskip


\subsection*{Acknowledgment}
Authors would like to thank to grant Vega 2/0006/16

\end{document}